\newtheorem{theorem}{Theorem}[section]
\newtheorem{prop}[theorem]{Proposition}
\newtheorem{lemma}[theorem]{Lemma}
\newtheorem{corollary}[theorem]{Corollary}
\theoremstyle{definition}
\newtheorem{definition}[theorem]{Definition}
\newtheorem{definitions}[theorem]{Definitions}
\newtheorem{examples}[theorem]{Examples}
\theoremstyle{remark}
\newtheorem{remark}[theorem]{\bf Remark}
\newtheorem{remarks}[theorem]{\bf Remarks}
 \numberwithin{equation}{section}
\newcommand{\si}{\sigma}
\newcommand{\de}{\delta}
\begin{document}
\title{ \bf\normalsize NONCOMMUTATIVE POLYNOMIAL MAPS}
 \author{ {\bf\normalsize  Andr\'{e} Leroy}\\
 \normalsize Universit\'{e} d'Artois,  Facult\'{e} Jean Perrin\\
\normalsize Rue Jean Souvraz  62 307 Lens, France\\
   \normalsize  e.mail: andre.leroy@univ-artois.fr\\}
\date{ }
\maketitle\markboth{ \bf A.Leroy}{ \bf Noncommutative polynomial
maps}

\begin{abstract}
Polynomial maps attached to polynomials of an Ore extension are
naturally defined.  In this setting we show the importance of
pseudo-linear transformations and give some applications.  In particular, 
factorizations of polynomials in an Ore extension over a finite field
$\mathbb F_q[t;\theta]$, where $\theta$ is the Frobenius automorphism, 
are translated into factorizations in the usual polynomial ring $\mathbb F_q[x]$. 
\end{abstract}

\section*{\normalsize INTRODUCTION}

Polynomial maps associated to an element of
an Ore extension $K[t;\si,\de]$ over a division ring $K$ have been considered and 
studied in different papers such as \cite{Co}, \cite{LL1}, \cite{LL2}
 \cite{LL4}, \cite{LLO}, \cite{Or}.  They have been used
in different areas such as: the construction of factorizations of
Wedderburn polynomials (\cite{LO}, \cite{DL}, \cite{HR}), the criterion for 
diagonalisation of matrices over division rings (\cite{LLO}), the solution
of linear differential equations and applications to time-varying systems (e.g. \cite{MB1}, \cite{MB2}), the constructions of new codes
(\cite{BU}, \cite{BGU}).

Pseudo-linear transformations are intimately related to modules
over an Ore extension $A[t;\si,\de]$ (see Section $1$). In the present
paper, we intend to show that they are also useful
tools for studying polynomial maps.

 In Section 1 we introduce the main definitions and give
some examples.  The results presented here concern general Ore 
extensions over rings which are not necessarily division rings.
The pseudo-linear transformations play a crucial role in this section.
They enable us to explain some features of roots of polynomials through
a bimodule structure of some cyclic modules (Cf. comments before Proposition
\ref{ring of endo} and Corollary \ref{the case of T_a} ) 
and they lead to a formula describing the polynomial map attached to a product of polynomials (Cf. 
Theorem \ref{ring homomorphism and generalized product formula}(2)).

Section 2 is devoted to applications.  We first recall
the way of computing "the number of right roots" of an Ore
polynomial with coefficients in a division ring and apply 
this to polynomials of the Ore extension
$\mathbb F_q[t;\theta]$ built over a finite field $\mathbb F_q$ 
with the Frobenius automorphism $\theta$.  
We also attach to every element $f(t) \in 
\mathbb F_q[t;\theta]$ a polynomial denoted $f^{[]}(x)\in \mathbb F_q[x]$ and 
use pseudo-linear transformations to study the strong relations between factors 
of $f(t)\in \mathbb F_q[t;\theta]$ and factors of $f^{[]}(x)\in \mathbb F_q[x]$.  
This makes Berlekamp algorithm available for factorizations of polynomials in $\mathbb F_q[t;\theta]$. 
As another application, we give a form of Hilbert 90 theorem as well as a short and easy 
proof of a generalized version of the so-called "Frobenius law" for computing
 $p$-th powers of a sum in characteristic $p>0$.  The other
applications are generalizations of standard results or give other
perspectives on them.

\section{Polynomial and pseudo-linear maps}

Let $A$ be a ring with $1$, $\sigma$ an endomorphism of $A$ and $\delta$ a 
$\sigma$-derivation of $A$ (i.e. $\delta\in End(A,+)$ and  
$\delta(ab)=\sigma(a)\delta(b)+\delta(a)b$, for $a,b\in A$).
The skew polynomial ring $R:=A[t;\sigma,\delta]$ (a.k.a. Ore extension) 
consists of elements of the form $\sum_{i=0}^{n}a_it^i$ where addition is performed as 
in the classical case and multiplication 
is based on the rule $ta=\sigma(a)t+\delta(a)$, for $a\in A$. 

\begin{definitions}
Let $A$ be a ring, $\sigma$ an endomorphism of $A$ and $\delta$ a
$\sigma$-derivation of $A$.  Let also $V$ stand for a left
$A$-module.
\begin{enumerate}
\item[a)] An additive map $T:V\longrightarrow V$ such that,
for $\alpha \in A$ and $v\in V$,
$$
T(\alpha v)=\sigma(\alpha)T(v) + \delta(\alpha)v.
$$
is called a ($\si,\de$) pseudo-linear transformation (or a ($\si,\de$)-PLT, for short).
\item[b)] For $f(t)\in R=A[t;\si,\de]$ and $a\in A$, we define $f(a)$ to be the
only element in $A$ such that $f(t)-f(a)\in R(t-a)$.
\end{enumerate}
\end{definitions}

In case $V$ is a finite dimensional vector space and $\si$ is an 
automorphism, the pseudo-linear transformations were introduced in \cite{Ja2}.  
They appear naturally in the context
of modules over an Ore extension $A[t;\si,\de]$.  This is
explained in the next proposition.

\begin{prop}
\label{PLT and R modules} Let $A$ be a ring $\si\in End(A)$ and
$\de$ a $\si$-derivation of $A$.   For an additive group $(V,+)$ the
following conditions are equivalent:
\begin{enumerate}
\item[(i)] $V$ is a  left $R=A[t;\si, \de]$-module;
\item[(ii)] $V$ is a left $A$-module and there exists a $(\si,\de)$ pseudo-linear 
transformation $T:V\longrightarrow V$;
\item[(iii)] There exists a ring homomorphism $\Lambda: R\longrightarrow
End(V,+)$.
\end{enumerate}
\end{prop}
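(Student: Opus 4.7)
The plan is to prove the equivalences via the cycle $(i)\Rightarrow(ii)\Rightarrow(i)$ and $(i)\Leftrightarrow(iii)$, since the equivalence with condition $(iii)$ is essentially formal.

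For $(i)\Rightarrow(ii)$, I would observe that $A$ embeds in $R=A[t;\sigma,\delta]$ as the constant polynomials, so any left $R$-module $V$ is automatically a left $A$-module by restriction. I would then define $T:V\to V$ by $T(v):=tv$. Additivity is clear, and the PLT identity is just the defining relation of the Ore extension applied to $v$:
\[
T(\alpha v) \;=\; t(\alpha v) \;=\; (t\alpha)v \;=\; \bigl(\sigma(\alpha)t+\delta(\alpha)\bigr)v \;=\; \sigma(\alpha)T(v)+\delta(\alpha)v.
\]

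For $(ii)\Rightarrow(i)$, given the $A$-module structure and the PLT $T$, I would set
\[
\Bigl(\sum_{i=0}^{n} a_i t^i\Bigr)\cdot v \;:=\; \sum_{i=0}^{n} a_i\, T^i(v),
\]
and verify that this defines an $R$-module action. The only nontrivial point is to check compatibility with multiplication in $R$; since $R$ is generated as a ring by $A$ and $t$ subject only to the relations of $A$ and $ta=\sigma(a)t+\delta(a)$, it suffices to check this last relation on $V$. But $(ta)\cdot v = T(av) = \sigma(a)T(v)+\delta(a)v = (\sigma(a)t+\delta(a))\cdot v$ by the PLT axiom, so associativity extends by induction on the degree to arbitrary polynomials. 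This is the only place where a short argument is needed; everything else is bookkeeping.

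For $(i)\Leftrightarrow(iii)$, I would use the general adjunction between modules and representations on an abelian group: given an $R$-module structure, define $\Lambda(r)(v):=rv$; then $\Lambda(r)$ is additive, $\Lambda$ is additive in $r$, $\Lambda(rs)=\Lambda(r)\Lambda(s)$, and $\Lambda(1)=\mathrm{id}_V$. Conversely, given a ring homomorphism $\Lambda:R\to End(V,+)$, set $rv:=\Lambda(r)(v)$ to recover an $R$-module structure. These constructions are mutually inverse, so $(i)\Leftrightarrow(iii)$ is purely formal.

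The main (and only) obstacle is really the well-definedness check in $(ii)\Rightarrow(i)$: one must confirm that iterating $T$ on $A$-scalar multiples correctly implements the noncommutative Ore multiplication. Since the PLT identity is exactly the image under $\Lambda$ of the defining relation $ta=\sigma(a)t+\delta(a)$, this matches perfectly and no further hypotheses are needed; in particular the result holds for any ring $A$, without assuming $\sigma$ invertible or $A$ a division ring.
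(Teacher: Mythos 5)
Your proof is correct and follows the same route the paper intends: the paper simply declares the equivalences straightforward and notes that for $(i)\Rightarrow(ii)$ the pseudo-linear transformation is left multiplication by $t$, which is exactly your construction. Your verification of $(ii)\Rightarrow(i)$ via the universal property of the Ore extension (checking only the relation $ta=\sigma(a)t+\delta(a)$ on $End(V,+)$) and the formal module--representation correspondence for $(i)\Leftrightarrow(iii)$ correctly fill in the details the paper omits.
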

\begin{proof}
The proofs are straightforward,  let us nevertheless mention that, for the implication 
$(i)\Rightarrow (ii)$, the ($\si,\de$)-PLT on $V$ is given by the left multiplication by $t$.
%
%
\end{proof}

Let $T$ be a ($\si,\de$)-PLT defined on an $A$-module $V$.  
Using the above notations, 
we define, for $f(t)=\sum_{i=0}^na_it^i\in R$, 
$f(T):=\Lambda (f(t))=\sum_{i=0}^na_iT^i\in End(V,+)$.   We can now state the following 
corollary.  It will be intensively used in the paper.

\begin{corollary}
\label{ring homomorphism defined by a PLT}
For any $f,g\in R=A[t;\si,\de]$ and any pseudo-linear transformation $T$ we have:
$(fg)(T)=f(T)g(T)$.
\end{corollary}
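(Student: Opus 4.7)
I would deduce this corollary directly from Proposition~\ref{PLT and R modules}. The approach is to identify the assignment $f(t)\mapsto f(T)$ with the ring homomorphism $\Lambda$ supplied by that proposition, and then invoke multiplicativity.

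Given a $(\si,\de)$-PLT $T$ on the left $A$-module $V$, the implication $(ii)\Rightarrow(iii)$ of Proposition~\ref{PLT and R modules} produces a ring homomorphism $\Lambda:R\longrightarrow End(V,+)$ satisfying $\Lambda(a)=a\cdot \mathrm{id}_V$ for $a\in A$ and $\Lambda(t)=T$. Since $R$ is generated as a ring by $A\cup\{t\}$, these values determine $\Lambda$ uniquely, and an easy induction on the degree shows that $\Lambda\bigl(\sum_{i=0}^n a_it^i\bigr)=\sum_{i=0}^n a_iT^i$. This is precisely the definition $f(T):=\Lambda(f(t))$ introduced just before the corollary. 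Multiplicativity of $\Lambda$ then yields
\[
(fg)(T)=\Lambda(fg)=\Lambda(f)\,\Lambda(g)=f(T)\,g(T),
\]
which is the desired identity.

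There is essentially no obstacle, since the content has been absorbed into Proposition~\ref{PLT and R modules}. The only non-formal check needed to build $\Lambda$ is that the PLT axiom $T(\alpha v)=\si(\alpha)T(v)+\de(\alpha)v$ is compatible with the skew commutation relation $ta=\si(a)t+\de(a)$ defining $R$---this compatibility is exactly what makes the assignment $t\mapsto T$, $a\mapsto a\cdot \mathrm{id}_V$ extend to a ring homomorphism on all of $R$. If one preferred a self-contained argument avoiding explicit reference to $\Lambda$, one could proceed by induction on $\deg g$: the case $g\in A$ reduces to associativity of the $A$-action together with the definition of the product in $R$, while the inductive step $g=ht$ uses $(f h t)(T)=(f h)(T)\circ T$ combined with the PLT axiom applied to an element of the form $\alpha v$. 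Routing through $\Lambda$ is however cleaner and fits the spirit of the paper.
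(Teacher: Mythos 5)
Your proposal is correct and matches the paper's intent exactly: the paper defines $f(T):=\Lambda(f(t))$ and presents the corollary as an immediate consequence of $\Lambda$ being the ring homomorphism from Proposition~\ref{PLT and R modules}(iii), which is precisely the route you take. The additional remarks on uniqueness of $\Lambda$ and the compatibility of the PLT axiom with the relation $ta=\si(a)t+\de(a)$ are sound and simply make explicit what the paper leaves implicit.
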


\begin{examples}
\label{examples of PLT}
{\rm
\begin{enumerate}
\item[(1)] If $\si=id.$ and $\de=0$, a pseudo-linear map is an endomorphism of
left $A$-modules.  If $\de=0$, a pseudo-linear map is usually called a ($\si$) 
semi-linear transformation.
\item[(2)] Let $V$ be a free left $A$-module with basis $\beta=\{e_1,\dots ,e_n\}$ and let 
$T:V \longrightarrow V$ be a ($\si, \de$)-PLT.  This gives rise to a ($\si,\de$)-PLT on 
the left $A$-module $A^n$ as follows:  first define $C=(c_{ij})\in M_n(A)$ by
$
T(e_i)=\sum_i^nc_{ij}e_j.
$ 
and extend component-wise $\si$ and $\de$ to the left $A$-module $A^n$.  
We then define a ($\si,\de$)-PLT on $A^n$ by   
$T_C(\underline{v})=\sigma(\underline{v})C+\delta(\underline{v})$, for $\underline{v}\in 
A^n$. In particular, for $n=1$ and $a\in A$, the map $T_a:A \longrightarrow A$ given 
by $T_a(x)=\si(x)a+\de(x)$ is a ($\si,\de$)-PLT.  The map $T_a$ will be called 
the $(\si,\de)$-PLT induced by $a\in A$.  Notice that $T_0=\de$ and $T_1=\si + \de$.
\item[(3)] It is well-known and easy to check that, extending $\si$ and $\de$
from a ring $A$ to $M_n(A)$ component-wise, gives an endomorphism, still denoted $\si$, 
and a $\si$-derivation also denoted $\de$ on the ring $M_n(A)$.  For $n,l\in \mathbb N$ 
we may also extend component-wise $\si$ and $\de$ to the additive group $V:=M_{n\times l}(A)$.  Let 
us denote these maps by $S$ and $D$ respectively.  Then $S$ is a $\si$ semi-linear map 
and $D$ is a ($\si,\de$)-PLT of the left $M_n(A)$-module $V$.  This 
generalizes the fact, mentioned in example $(2)$ above, that $\de$ itself is a 
pseudo-linear transformation on $A$. 
\item[(4)] Let $_AV_B$ be an ($A,B$)-bimodule and suppose that $\si$ and $\de$ are an 
endomorphism and a $\si$-derivation on $A$, respectively. If $S$ is a 
$\si$ semi-linear map and $T$ is a ($\si,\de$) PLT on $_AV$, then for any $b\in B$, the 
map $T_b$ defined by $T_b(v)=S(v)b + T(v)$, for $v\in V$, is a ($\si,\de$) pseudo-linear 
map on $V$.  
\item[(5)] Using both Examples $(3)$ and $(4)$ above, we obtain a $(\si,\de)$ 
pseudo-linear transformation on the set of rectangular matrices $V:=M_{n\times l}(A)$ 
(considered as an ($M_n(A),M_l(A)$)-bimodule)  
by choosing a square matrix $b\in M_l(A)$ and putting 
$T_b(v)=S(v)b+D(v)$ where $S$ and $D$ are defined component-wise as in Example $(3)$ and 
$v\in V$.  This construction will be used in Proposition \ref{ring of endo}.
\end{enumerate}
}\end{examples}

\begin{remarks}
\label{composition and link}
\begin{enumerate}
\item[(1)]
 Let us mention that the composition of pseudo-linear
transformations is usually not a pseudo-linear transformation.
Indeed, let $T:V\longrightarrow V$ be a $(\si,\de)$-PLT.  
For $a\in A$, $v\in V$ and $n\ge 0$, we
have $T^n(av)=\sum_{i=0}^nf^n_i(a)T^i(v)$, where $f^n_i$ is the sum
of all words in $\si$ and $\de$ with $i$ letters $\si$ and
$n-i$ letters $\de$. 
\item[(2)]  Let us now indicate explicitly the link between polynomial 
maps and pseudo-linear transformations.  
Since, for $a\in A$, the pseudo-linear transformation on $A$
associated to the left $R$-module
$V=R/R(t-a)$ is $T_a$ (Cf. Example \ref{examples of PLT}(2)).  The equality 
$f(t).1_V=f(a) + R(t-a)$ leads to 
$$f(T_a)(1)=f(a).$$  
\end{enumerate}
\end{remarks}

\vspace{2mm}

For a left $R$-module $V$, we consider the standard $(R,End_RV)$-bimodule 
structure of $V$.  In the proof of Proposition \ref{PLT and R 
modules} we noticed that $T$ corresponds to the left multiplication by 
$t$ on $V$.  This implies that, for any $f(t) \in R , f(T)$ is a right 
$End_R(V)$-linear map defined on $V$.  In particular, $\ker f(T)$ is a 
right $End_R(V)$ submodule of $V$.  Considering $V=R/R(t-a)$ for $a\in A$, this module 
structure on $\ker (f(T_a))$ explains and generalizes some 
important properties of roots of polynomials obtained earlier (Cf.
\cite{LL1}, \cite{LL2}, \cite{LLO}),  see Corollary \ref{the case of T_a} for 
more details).  
 Let us describe the elements of $End_R(V)$ in 
case $V$ is a free left $A$-module.  We extend the maps $\si$
 and $\delta$ to matrices over $A$ by letting them act on every
 entry.
\begin{prop}
\label{ring of endo}
For $i=1,2$, let $T_i$ be a $(\si,\de)$-PLT
defined on a free $A$-module $V_i$ with basis $\beta_i$ and dimension $n_i$.                                                                                                                                                                                                                                                                                                                                                                                                                                                                                                                                                                                                                                                                                                                                                                                                                                                                                                                                                                                                                                                                                                                                                                                                                                                                                                                                                                                                                                                                                                                                                                                                                                                                                                                                                                                                                                                                                                                                                                                                                                                                                                                                                                                                                                                                       
Suppose $\varphi \in Hom_A(V_1,V_2)$ is an $A$-module homomorphism. 
Let also $B\in M_{n_1\times n_2}(A)$, $C_1\in M_{n_1\times n_1}(A)$ and $C_2 \in 
M_{n_2\times n_2}(A) $ denote 
matrices representing $\varphi$, $T_1$ and $T_2$ 
respectively in the appropriate bases $\beta_1$ and $\beta_2$.
Let $_RV_1$ and $_RV_2$ be the left $R$-module structures induced by $T_1$ and $T_2$, 
respectively.  The following conditions are equivalent: 
\begin{enumerate}
 \item[(i)] $\varphi \in Hom_R(V_1,V_2)$;
 \item[(ii)]  $\varphi T_1=T_2 \varphi$;
 \item[(iii)] $C_1B=\sigma(B)C_2+ \delta(B)$;
 \item[(iv)] $B\in \ker(T_{C_2}-L_{C_1})$ where $T_{C_2}$ (resp. 
 $L_{C_1}$) stands for the pseudo-linear transformation 
(resp. the left multiplication) induced by $C_2$ (resp. $C_1$) on 
 $M_{n_1\times n_2}(A)$ considered as a left $M_{n_1}(A)$-module.
\end{enumerate}
\end{prop}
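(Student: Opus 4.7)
The plan is to go around the square $(i) \Leftrightarrow (ii) \Leftrightarrow (iii) \Leftrightarrow (iv)$, showing that each equivalence reduces to a bookkeeping check once the setup is unpacked.

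First, I would establish $(i) \Leftrightarrow (ii)$. By Proposition~\ref{PLT and R modules}, the left $R$-module structure on each $V_i$ is precisely the left $A$-module structure together with the action of $t$ as $T_i$. Hence an $A$-linear map $\varphi \colon V_1 \to V_2$ is $R$-linear if and only if it intertwines these actions of $t$, i.e. $\varphi T_1 = T_2 \varphi$; nothing deeper is at play.

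Next comes $(ii) \Leftrightarrow (iii)$, which is the matrix translation of the intertwining identity. Writing $\beta_1 = \{e_1, \dots, e_{n_1}\}$, $\beta_2 = \{f_1, \dots, f_{n_2}\}$, and $\varphi(e_i) = \sum_j b_{ij} f_j$, I would evaluate both sides on each $e_i$. For $\varphi T_1(e_i)$ the $A$-linearity of $\varphi$ and the definition $T_1(e_i) = \sum_k (c_1)_{ik} e_k$ immediately produce the $i$-th row of $C_1 B$. For $T_2 \varphi(e_i)$, expanding via the pseudo-linear relation $T_2(\alpha v) = \sigma(\alpha) T_2(v) + \delta(\alpha) v$ on each summand $b_{ij} f_j$ and reorganising yields the $i$-th row of $\sigma(B) C_2 + \delta(B)$. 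Equating the two expressions gives $(iii)$. This step is where the only real care is needed: the $\sigma$ and $\delta$ must be applied entrywise to $B$ in the right order, because $\varphi$ is $A$-linear but $T_2$ is only pseudo-linear, so the $\sigma(B) C_2$ term arises on the $T_2 \varphi$ side and has no counterpart on the $\varphi T_1$ side. This asymmetry is what I would call the main obstacle to the proof.

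Finally, $(iii) \Leftrightarrow (iv)$ is pure recognition. Example~\ref{examples of PLT}(5), specialised to the $(M_{n_1}(A), M_{n_2}(A))$-bimodule $M_{n_1 \times n_2}(A)$ with $b = C_2$, gives exactly $T_{C_2}(B) = \sigma(B) C_2 + \delta(B)$, while $L_{C_1}(B) = C_1 B$ by definition. The identity in $(iii)$ thus reads $(T_{C_2} - L_{C_1})(B) = 0$, which is precisely the membership $B \in \ker(T_{C_2} - L_{C_1})$ asserted in $(iv)$, closing the chain of equivalences.
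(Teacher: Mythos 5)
Your proposal is correct and follows essentially the same route as the paper: $(i)\Leftrightarrow(ii)$ via the identification of $T_i$ with the left action of $t$, $(ii)\Leftrightarrow(iii)$ by computing the matrix of $T_2\varphi$ entrywise using pseudo-linearity to obtain $\sigma(B)C_2+\delta(B)$ against $C_1B$ for $\varphi T_1$, and $(iii)\Leftrightarrow(iv)$ by recognizing $(T_{C_2}-L_{C_1})(B)=\sigma(B)C_2+\delta(B)-C_1B$ from Example~\ref{examples of PLT}(5). No gaps; your remark that the asymmetry between the $A$-linear $\varphi$ and the pseudo-linear $T_2$ is the only delicate point matches exactly where the paper's computation does its work.
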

\begin{proof}
\noindent $(i)\Leftrightarrow (ii)$.  This is clear since, for $i=1,2$, $T_i$ corresponds to the left 
action of $t$ on $V_i$.

\noindent $(ii) \Leftrightarrow (iii)$.  Let us put $\beta_1:=\{e_1,\dots,e_{n_1}\}$,
$\beta_2:=\{f_1,\dots,f_{n_2}\}$, $C_1=(c^{(1)}_{ij})$, $C_2=(c^{(2)}_{ij})$ and $B=(b_{ij})$.  
We then have, for any $1\le i\le n_1$, $T_2(\varphi(e_i))=T_2(\sum_j 
b_{ij}f_j)=\sum_j(\si(b_{ij})T_2(f_j)+\de(b_{ij})f_j)=\sum_k (\sum_j \si(b_{ij})c^{(2)}_{jk} 
+ \delta(b_{ik} ) ) f_k$.  Hence the matrix associated to 
$T_2\varphi$ in the bases $\beta_1$ and $\beta_2$ is $\si(B)C_2 + \delta(B)$.  This yields the result.

\noindent $(iii) \Leftrightarrow (iv)$. 
It is enough to remark that the definition of 
$T_{C_2}$ acting on $M_{n_1\times n_2}(A)$ shows that, for any $B\in M_n(A)$,
$(T_{C_2}-L_{C_1})(B)=\si(B)C_2+\de(B)-C_1B$.  
\end{proof}

\begin{remark}
The above proposition \ref{ring of endo} shows that
the equality $(iii)$ is independent of the bases.  Hence,
if $P_1\in M_{n_1}(A)$ and $P_2\in M_{n_2}(A)$ are invertible matrices associated to 
change of bases in $V_1$ and $V_2$ then $C_1'B'=\sigma(B')C_2'+\delta(B')$ for 
$B':=P_1BP_2^{-1}$, $C_1':=\si(P_1)C_1P_1^{-1} + \de(P_1)P_1^{-1}$ and 
$C_2':=\si(P_2)C_2P_2^{-1}+\de(P_2)P_2^{-1}$.  Of course, this can also be checked 
directly.
\end{remark}



Let $p(t)=\sum_{i=0}^n a_it^i$ be a monic
polynomial of degree $n$ and consider the left $R=A[t;\si,\de]$ module
$V:=R/Rp$.  It is a free left $A$-module with basis $\beta:=\{\overline{1},\overline{t},\dots,
\overline{t^{n-1}}\}$, where $\overline{t^i}=t^i +Rp$ for $i=1,\dots,n-1$.  
In the basis $\beta$, the matrix corresponding to left multiplication by $t$ is the usual companion 
matrix of $p$ denoted by $C(p)$ and defined by
$$ C(p) = \begin{pmatrix}0&1&0&\cdots&0 \\0&0&1&\cdots&0 \\
\vdots&\vdots&\vdots&\ddots&\vdots \\
0&0&0&\cdots&1\\
-a_0&-a_1&-a_2&\cdots&-a_{n-1}
\end{pmatrix}$$

\begin{corollary}
\label{isomorphisme of cyclic modules} 
Let $p_1, p_2\in R=A[t;\si,\de]$ be two monic polynomials of degree $n\ge 1$ with 
companion matrices $C_1,C_2\in M_n(A)$.   $R/Rp_1 \cong R/Rp_2$ if and only if 
there exists an invertible matrix $B$ such that $C_1B=\si(B)C_2 +\de(B)$. 
\end{corollary}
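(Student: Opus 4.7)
The plan is to deduce this directly from Proposition \ref{ring of endo} applied to $V_1:=R/Rp_1$ and $V_2:=R/Rp_2$. As observed in the paragraph preceding the corollary, each $V_i$ is a free left $A$-module of rank $n$ with basis $\beta_i=\{\overline{1},\overline{t},\dots,\overline{t^{n-1}}\}$, and in this basis the pseudo-linear transformation $T_i$ given by left multiplication by $t$ is represented precisely by the companion matrix $C_i$. Thus both modules are in the exact setup of Proposition \ref{ring of endo}.

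First I would handle the forward direction. Suppose $\varphi:V_1\longrightarrow V_2$ is an $R$-isomorphism. In particular $\varphi$ is an $A$-module isomorphism between free $A$-modules of the same rank, hence represented in the bases $\beta_1,\beta_2$ by an invertible matrix $B\in M_n(A)$. The equivalence $(i)\Leftrightarrow(iii)$ of Proposition \ref{ring of endo} (with $n_1=n_2=n$) then gives $C_1B=\sigma(B)C_2+\delta(B)$.

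For the converse, given an invertible matrix $B\in M_n(A)$ satisfying $C_1B=\sigma(B)C_2+\delta(B)$, I would let $\varphi:V_1\longrightarrow V_2$ be the $A$-linear map whose matrix in the bases $\beta_1,\beta_2$ is $B$. Since $B$ is invertible, $\varphi$ is an $A$-module isomorphism. The equivalence $(iii)\Rightarrow(i)$ of Proposition \ref{ring of endo} yields that $\varphi$ is even $R$-linear; its inverse $\varphi^{-1}$, being the $A$-linear inverse of an $R$-linear bijection, is automatically $R$-linear as well. Therefore $\varphi$ is an $R$-module isomorphism, completing the proof.

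There is really no serious obstacle here: the content is entirely contained in Proposition \ref{ring of endo}, and the only small point to verify is that an $R$-homomorphism between $V_1$ and $V_2$ can indeed be represented by a (square) matrix in the chosen $A$-bases, which is immediate because $V_1$ and $V_2$ are free of the same finite rank over $A$.
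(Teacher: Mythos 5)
Your proof is correct and follows exactly the route the paper intends: the corollary is stated without proof as an immediate consequence of Proposition \ref{ring of endo} applied to $V_i=R/Rp_i$ with the bases $\{\overline{1},\overline{t},\dots,\overline{t^{n-1}}\}$, in which the pseudo-linear transformations are represented by the companion matrices. Your handling of both directions, including the remark that the $A$-linear inverse of an $R$-linear bijection is again $R$-linear, fills in precisely the routine details the paper leaves implicit.
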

The pseudo-linear transformation induced on $A^n$ by
$C(p)$ will be denoted $T_{p}$.  


Recall that $Rp$ is a two sided ideal in its idealizer ring $Idl(Rp)=\{g\in R \, \vert \,pg\in Rp\}$.
  The quotient ring $\frac{Idl(Rp)}{Rp}$ is called the eigenring of $Rp$ and is isomorphic to 
   $End_R(R/Rp)$.  The $(R, End_R(R/Rp))$-bimodule structure of $R/Rp$
gives rise to a natural $(R, End_R(R/Rp))$-bimodule structure on $A^n$.
For future reference we sum up some information in the form of a corollary.

\begin{corollary}
\label{module structure on kernels}
Let $p(t)\in R$ be a monic polynomial of degree $n$ and denote by $C=C(p)$ 
its companion matrix.  We have:
\begin{enumerate}
\item[(a)] The eigenring $End_R(R/Rp)$ is isomorphic to 
$C_p^{\si,\de}:= \{B\in M_n(A)\, | \, CB=\si(B)C + \de(B\}$.
\item[(b)] $A^n$ has an $(R,C_p^{\si,\de})$-module structure.
\item[(c)] For $f(t)\in R$, $f(T_p)$ is a right $C_p^{\si,\de}$-morphism.
In particular, $\ker f(T_p)$ is a right $C_p^{\si,\de}$-submodule of $A^n$.
\end{enumerate}
\end{corollary}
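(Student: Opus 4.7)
The plan is to obtain all three parts by specializing Proposition \ref{ring of endo} to $V_1=V_2=R/Rp$ and then transporting the standard $(R,End_R(R/Rp))$-bimodule structure on $R/Rp$ along the left $A$-module isomorphism $R/Rp\cong A^n$ coming from the basis $\beta=\{\overline{1},\overline{t},\dots,\overline{t^{n-1}}\}$. Throughout, the key observation is that under this basis identification, left multiplication by $t$ on $R/Rp$ is precisely the pseudo-linear transformation $T_p$ on $A^n$ defined by the companion matrix $C=C(p)$, so left multiplication by $f(t)$ becomes $f(T_p)$.

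For (a), I apply the equivalence $(i)\Leftrightarrow(iii)$ of Proposition \ref{ring of endo} with $T_1=T_2=T_p$ and $C_1=C_2=C$. This gives an additive bijection $\varphi\mapsto B_\varphi$ between $End_R(R/Rp)$ and the set $C_p^{\sigma,\delta}$. To upgrade it to a ring isomorphism, I just need to observe that since each $\varphi\in End_R(R/Rp)$ is in particular an $A$-linear endomorphism of the free module $R/Rp$, matrix representation in the basis $\beta$ converts composition of maps to matrix multiplication in the usual way; hence $C_p^{\sigma,\delta}$ is closed under multiplication and the correspondence is multiplicative. Combined with the already-recalled identification of the eigenring $Idl(Rp)/Rp$ with $End_R(R/Rp)$, part (a) follows.

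For (b), the left $R$-module structure on $A^n$ is given by Proposition \ref{PLT and R modules} applied to $T_p$. The right action of $End_R(R/Rp)$ on $R/Rp$ (by evaluation of endomorphisms) is by $A$-linear maps, and thus via the $A$-module isomorphism $R/Rp\cong A^n$ it transports to a right action of $End_R(R/Rp)\cong C_p^{\sigma,\delta}$ on $A^n$. Because $R/Rp$ is an $(R,End_R(R/Rp))$-bimodule, the left $R$- and right $C_p^{\sigma,\delta}$-actions commute, giving the bimodule structure on $A^n$. For (c), under the identification above, $f(T_p)$ is left multiplication by $f(t)$ on $R/Rp$, which commutes with every right action of $End_R(R/Rp)$ by the bimodule axiom; hence $f(T_p)$ is right $C_p^{\sigma,\delta}$-linear, and its kernel is automatically a right $C_p^{\sigma,\delta}$-submodule.

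The only subtle point I anticipate is bookkeeping in (a): depending on whether one writes compositions on the left or on the right, the bijection of Proposition \ref{ring of endo} gives an isomorphism of $End_R(R/Rp)$ either with $C_p^{\sigma,\delta}$ or with its opposite. This is a convention issue that does not affect the module-theoretic content of (b) and (c), where only the action (not the direction of multiplication on $C_p^{\sigma,\delta}$) matters; once the convention from the proof of Proposition \ref{ring of endo} is adopted consistently, all three assertions follow without further calculation.
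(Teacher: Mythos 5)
Your proposal is correct and follows exactly the route the paper intends: the paper states this corollary without a written proof, as a summary of the preceding discussion (the identification of the eigenring with $End_R(R/Rp)$, Proposition \ref{ring of endo} applied with $V_1=V_2=R/Rp$ and $C_1=C_2=C(p)$, and the transport of the standard $(R,End_R(R/Rp))$-bimodule structure to $A^n$ via the basis $\beta$). Your filling-in of the details, including the remark that the row-coordinate convention makes the correspondence $\varphi\mapsto B_\varphi$ multiplicative rather than anti-multiplicative, is accurate and adds nothing that conflicts with the paper.
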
 
We need to fix some notations.  
Thinking of the evaluation $f(a)$ of a polynomial
$f(t)\in R=A[t;\si,\de]$ at $a\in A$ as an element of $A$
representing $f(t)$ in $R/R(t-a)$, we introduce the following
notation:   for a polynomial $f(t)\in R$ and a monic polynomial
$p(t)\in R$ of degree $n$, $f(p)$ stands for the unique element in
$R$ of degree $< \deg (p)=n$ representing $f(t)$ in $R/Rp(t)$.
Since divisions on the right by the monic polynomial $p$ can be
performed in $R$, $f(p)$ is the remainder of the right
division of $f(t)$ by $p(t)$.  We write $\overline{f(p)}$ for the
image of $f(p)$ in $R/Rp$. 
For $v\in V=R/Rp$, we denote by $v_\beta\in A^n$ the row of coordinates of $v$ in the basis 
$\beta:=\{\overline{1},\overline{t},\dots,\overline{t^{n-1}}\}$.
 Using the above notations we can state the following theorem.
\begin{theorem}
\label{ring homomorphism and generalized product formula} Let $p(t)\in R=A[t;\si,\de]$ be a monic 
polynomial of degree $n\ge 1$.  Then:
\begin{enumerate}
\item[(1)] For $f(t)\in R$ we have:
$\overline{f(p)}_\beta=f(T_{p})(1,0,\dots,0).$
\item[(2)] For $f(t),g(t)\in R$, we have:
$\overline{(fg)(p)}_\beta=f(T_{p})(\overline{g(p)}_\beta).$
\item[(3)] For $f(t)\in R$ there exist bijections between 
the following sets $\ker f(T_p)$, $\{g\in R \, \vert \, \deg(g)<n \;{\rm and}\; fg\in 
Rp\}$ and $Hom_R(R/Rf,R/Rp).$
\item[(4)] $Idl (Rp)=\{g\in R \, | \, g(T_p)(1,0,\dots, 0)\in \ker p(T_p)\}$.
\end{enumerate}
\end{theorem}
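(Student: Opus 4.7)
The plan is to exploit the fact that, under the basis $\beta$ of $V:=R/Rp$, left multiplication by $t$ on $V$ is represented by the companion matrix $C(p)$, i.e.\ by the pseudo-linear transformation $T_p$ on $A^n$. Combined with Corollary \ref{ring homomorphism defined by a PLT}, this gives $f(T_p)(v_\beta)=(f(t)\cdot v)_\beta$ for every $v\in V$ and $f\in R$, and turns the theorem into a routine coordinate translation.

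For (1), the element $\overline{1}\in V$ has coordinate row $(1,0,\dots,0)$, so $f(T_p)(1,0,\dots,0)=(f(t)\cdot\overline{1})_\beta=\overline{f(t)}_\beta$. The definition of $f(p)$ as the remainder of $f(t)$ on right division by $p(t)$ gives $\overline{f(t)}=\overline{f(p)}$ in $V$, proving (1). For (2), applying Corollary \ref{ring homomorphism defined by a PLT} to $T_p$ yields $(fg)(T_p)=f(T_p)g(T_p)$; evaluating at $(1,0,\dots,0)$ and using (1) on each side gives $\overline{(fg)(p)}_\beta=f(T_p)(\overline{g(p)}_\beta)$.

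For (3), the assignment $g\mapsto\overline{g}_\beta$ is a bijection between polynomials of degree $<n$ and $A^n$; under this map the condition $fg\in Rp$ translates via (2) to $f(T_p)(\overline{g}_\beta)=\overline{(fg)(p)}_\beta=0$, giving the bijection with $\ker f(T_p)$. For the third set, since $R/Rf$ is a cyclic left $R$-module generated by $\overline{1}$ with annihilator $Rf$, any $\varphi\in Hom_R(R/Rf,R/Rp)$ is determined by $\varphi(\overline{1})\in V$, and conversely an element $\overline{h}\in V$ (with $\deg h<n$) defines such a morphism precisely when $f\cdot\overline{h}=0$, i.e.\ $fh\in Rp$; this yields the bijection with $\{g : \deg g<n,\ fg\in Rp\}$. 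For (4), $g\in Idl(Rp)$ iff $pg\in Rp$ iff $\overline{(pg)(p)}=0$; by (2) and (1) this reads $p(T_p)(g(T_p)(1,0,\dots,0))=0$, i.e.\ $g(T_p)(1,0,\dots,0)\in\ker p(T_p)$.

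There is no genuine obstacle; the only delicate point is the cyclic-module bookkeeping in the $Hom_R$ bijection of (3), where one must check both that $\varphi(\overline{1})$ determines $\varphi$ uniquely and that any $\overline{h}\in V$ with $f\overline{h}=0$ extends uniquely to an $R$-morphism. Everything else is immediate from the PLT interpretation of the action of $t$ on $\beta$ and from Corollary \ref{ring homomorphism defined by a PLT}.
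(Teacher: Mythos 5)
Your proof is correct and follows essentially the same route as the paper: both rest on the identification of $T_p$ with left multiplication by $t$ on $R/Rp$ in the basis $\beta$ together with Corollary \ref{ring homomorphism defined by a PLT}, and parts (2)--(4) are handled identically. The only cosmetic difference is in (1), where the paper performs the explicit right division $f(t)=q(t)p(t)+f(p)$ and uses $p(T_p)(1,0,\dots,0)=(0,\dots,0)$ rather than your direct coordinate translation $f(T_p)(v_\beta)=(f(t)\cdot v)_\beta$.
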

\begin{proof}
\noindent $(1)$  The definition of $f(p)$ implies that there exists
$q(t)\in A[t;\sigma,\delta]$ such that $f(t)=q(t)p(t) + f(p)$.  This leads
to $f(T_p)=q(T_p)p(T_p)+f(p)(T_p)$.  Since $T_p=T_{c(p)}$ we easily get
$p(T_p)(1,\dots,0)=(0,\dots,0)$.  Noting that $\deg(f(p))<n$, we also have  $f(p)(T_p)(1,0,\dots,0)=\overline{f(p)}_\beta$.  This leads to the required 
equality.

\noindent $(2)$  The point $(1)$ above and corollary \ref{ring homomorphism defined by a 
PLT} give $\overline{(fg)(p)}_\beta=(fg)(T_p)(1,0,\dots,0)=f(T_p)(g(T_p)(1,0,\dots,0))
=f(T_p)(\overline{g(p)}_\beta)$.

\noindent $(3)$  The map $\psi :\ker f(T_p) \longrightarrow R$ defined by 
$\psi ((v_0,\dots,v_{n-1}))=\sum_{i=0}^{n-1}v_it^i$ is injective and, using $(2)$ 
above with $g(t):=\sum_{i=0}^{n-1}v_it^i$, we obtain $0=f(T_p)(v_0,\dots,v_{n-1})
=f(T_p)(\overline{g(p)}_\beta)=\overline{fg(p)}_\beta$.  This means that $fg\in Rp$.
The map $\psi$ is the required first bijection in statement $(3)$.

\noindent Now, if $g\in R$ is such that $\deg(g) < n$ and $fg\in Rp$ then the map 
$\varphi_g:R/Rf\longrightarrow R/Rp$ 
defined by $\varphi_g(h+Rf)=hg+Rp$ is an element of $Hom_R(R/f,R/Rp)$.
The map $\gamma:\{g\in R \,|\,\deg(g)<n, fg\in Rp \}\longrightarrow Hom_R(R/Rf,R/Rp)$ 
defined by $\gamma (g)=\varphi_g$ is easily seen to be bijective.

\noindent $(4)$  Let us remark that $g\in R$ is such that 
$pg\in Rp$ iff $\overline{(pg)(p)}_\beta=0$ iff $p(T_p)(\overline{g(p)}_\beta)=0$.
The first statement $(1)$ above gives the required conclusion.
\end{proof}

The next corollary requires a small lemma which is interesting by itself.
For a free left $A$-module $V$ with basis $\beta=\{e_1,\dots,e_n\}$ and $\varphi \in 
End(V,+)$ we write $\varphi(e_i)=\sum_j\varphi_{ij}e_j$ and denote by $\varphi_\beta\in M_n(A)$ the matrix defined by $\varphi_\beta=(\varphi_{ij})$.
\begin{lemma}
Let $T$ be a pseudo-linear transformation defined on a free left $A$-module $V$ with 
basis $\beta=\{e_1,\dots,e_n\}$ and $f(t)\in R=A[t;\si,\de]$.  Considering $f(t)$ as an 
element of $M_n(A)[t;\si,\de]$, we have $f(T)_\beta=f(T_\beta)$.   
\end{lemma}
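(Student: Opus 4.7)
\emph{Plan of proof.} The plan is to identify $f(T)_\beta$ with the result of evaluating $f(t)$ at $T_\beta$ inside the Ore extension $M_n(A)[t;\sigma,\delta]$, by recognising that iterating $T$ on basis vectors mirrors row-by-row the iteration of the matrix PLT on the identity matrix, and then invoking Remark \ref{composition and link}(2) in the matrix setting.

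Set $C := T_\beta$. First I would unpack the coordinate action of $T$: writing $v = \sum_i x_i e_i \in V$ as the row vector $v_\beta = (x_1,\dots,x_n)$, the PLT axiom gives $T(v)_\beta = \sigma(v_\beta) C + \delta(v_\beta)$. This is, row-by-row, exactly the matrix PLT $T_C$ on $M_n(A)$ from Example \ref{examples of PLT}(5), defined by $T_C(X) = \sigma(X) C + \delta(X)$ with $\sigma,\delta$ extended component-wise.

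Next I would prove, by induction on $k \ge 0$, that $T^k(e_j)_\beta$ equals the $j$-th row of $T_C^k(I)$. The base case $k=0$ gives $\mathbf{e}_j$, the $j$-th row of the identity matrix. For the inductive step, since the entry-wise extensions of $\sigma$ and $\delta$ commute with row extraction, applying $T$ to the $j$-th row of $T_C^k(I)$ and applying $T_C$ to the whole matrix then extracting the $j$-th row yield the same vector. Taking $A$-linear combinations (which, under the embedding $A \hookrightarrow M_n(A)$, $a \mapsto aI$, corresponds to left multiplication by scalar matrices and is thus compatible with the $M_n(A)$-module structure), the $j$-th row of $f(T)_\beta$ equals the $j$-th row of $\sum_i a_i T_C^i(I) = f(T_C)(I)$.

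Finally, Remark \ref{composition and link}(2) applied to the PLT $T_C$ on the left $M_n(A)$-module $M_n(A)$ (with the element $I$ in place of $1$) yields $f(T_C)(I) = f(C) = f(T_\beta)$, whence $f(T)_\beta = f(T_\beta)$. The only points requiring care are the bookkeeping across the embedding $A \hookrightarrow M_n(A)$ and the compatibilities $\sigma(aI) = \sigma(a) I$, $\delta(aI) = \delta(a) I$, both immediate from the component-wise extension noted in Example \ref{examples of PLT}(3). With these in hand the argument is essentially forced by the coordinate description; the main conceptual step is the inductive matching of $T^k$ on $V$ with $T_C^k$ on $M_n(A)$, and this is where the fact that $\sigma, \delta$ extend by acting on each entry is crucial.
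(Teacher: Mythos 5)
Your proof is correct. Note that the paper gives no argument of its own for this lemma --- the ``proof'' is just the citation \cite{L}, Lemma 3.3 --- so there is nothing in the paper to compare against; your write-up supplies a self-contained argument using only material already established in Section 1, which is exactly what one would want here. The two pivots are sound: (i) the coordinate identity $T(v)_\beta=\si(v_\beta)C+\de(v_\beta)$ with $C=T_\beta$ shows that the matrix PLT $T_C$ of Example \ref{examples of PLT}(5) reproduces $T$ row by row, so induction gives that the $j$-th row of $T_C^k(I)$ is $T^k(e_j)_\beta$, and left-multiplying rows by the coefficients $a_i$ (i.e.\ by the scalar matrices $a_iI$) yields $f(T)_\beta=f(T_C)(I)$; and (ii) Remark \ref{composition and link}(2), applied over the ring $M_n(A)$ equipped with the entry-wise extensions of $\si$ and $\de$ from Example \ref{examples of PLT}(3), gives $f(T_C)(I)=f(C)=f(T_\beta)$, where $f(C)$ is the evaluation of $f$ viewed in $M_n(A)[t;\si,\de]$. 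The one convention worth flagging explicitly is the paper's row convention $\varphi(e_i)=\sum_j\varphi_{ij}e_j$, under which coordinate vectors are rows and scaling a vector by $a\in A$ is left multiplication of its row; your bookkeeping is consistent with it throughout.
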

\label{matrix of f(T)}
\begin{proof}
 (Cf.  \cite{L} Lemma 3.3).
\end{proof}
The following corollary is an easy generalization of the classical fact that
the companion matrix, $C:=C(p)\in M_n(A)$, of a monic polynomial $p$ of degree 
$n$ annihilates the polynomial itself.  As earlier, we extend $\si$ and $\de$ to
$M_n(A)$ component-wise.
\begin{corollary}
Let $p(t)\in R=A[t;\si,\de]\subset M_n(A)[t;\si,\de]$ be a monic polynomial of degree 
$n> 1$.  Then the following assertions are equivalent:
\begin{enumerate}
\item[(i)] $t\in Idl(Rp)$;
\item[(ii)] for any $f\in R,\;f\in Rp$ if and only if $f(C(p))=0$;
\item[(iii)] $p(C(p))=0$.
\end{enumerate}
\end{corollary}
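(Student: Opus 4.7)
The plan is to reduce the three conditions to statements about the additive map $p(T_p):A^n\to A^n$. The preceding lemma, applied to $T=T_p$ whose matrix in the standard basis $\beta=\{e_1,\dots,e_n\}$ of $A^n$ is $C(p)$ itself, yields $f(T_p)_\beta = f(C(p))$ for every $f\in R$; hence $f(C(p))=0$ if and only if $f(T_p)(e_i)=0$ for all $i=1,\dots,n$. Direct inspection of $C(p)$ shows $T_p(e_i)=e_{i+1}$ for $i<n$, so $e_i=T_p^{i-1}(e_1)$ for $i=1,\dots,n$, and combining this with Corollary \ref{ring homomorphism defined by a PLT} and Theorem \ref{ring homomorphism and generalized product formula}(1) gives
$$f(T_p)(e_i) \;=\; (ft^{i-1})(T_p)(e_1) \;=\; \overline{(ft^{i-1})(p)}_\beta,$$
so that $f(T_p)(e_i)=0$ iff $ft^{i-1}\in Rp$.

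For (i) $\Leftrightarrow$ (iii) I would specialize this to $f=p$: condition (iii) becomes $pt^{i-1}\in Rp$ for $i=1,\dots,n$, i.e.\ $1,t,\dots,t^{n-1}\in Idl(Rp)$. Since $Idl(Rp)$ is a subring of $R$ containing $1$, this collapses to the single requirement $t\in Idl(Rp)$, which is (i).

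The implication (ii) $\Rightarrow$ (iii) is immediate from $p\in Rp$. For (iii) $\Rightarrow$ (ii), given $f\in Rp$ I would write $f=qp$ and apply Corollary \ref{ring homomorphism defined by a PLT} to obtain $f(T_p)=q(T_p)p(T_p)$; evaluated on any $e_i$ the inner factor is killed by (iii), so $f(T_p)(e_i)=0$ for every $i$ and therefore $f(C(p))=0$. Conversely, if $f(C(p))=0$ then in particular $f(T_p)(e_1)=0$, which by Theorem \ref{ring homomorphism and generalized product formula}(1) forces $\overline{f(p)}_\beta=0$, i.e.\ $f(p)=0$ and $f\in Rp$.

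The one conceptual point requiring care is the forward half of (iii) $\Rightarrow$ (ii): condition (iii) only says that $p(T_p)$ annihilates the basis vectors $e_i$, not that $p(T_p)=0$ as an additive map on all of $A^n$ (the latter would be tantamount to $Rp$ being two-sided, a condition strictly stronger than (i) in general). This weaker vanishing is nevertheless enough, since $q(T_p)\circ p(T_p)$ already kills each $e_i$ simply because the inner map does.
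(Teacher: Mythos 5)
Your proof is correct and rests on the same toolkit as the paper's: the lemma identifying $f(T_p)_\beta$ with $f((T_p)_\beta)=f(C(p))$, parts (1) and (4) of Theorem \ref{ring homomorphism and generalized product formula}, and the fact that $T_p$ shifts the standard basis vectors, so I count it as essentially the same argument. The only differences are organizational --- you prove $(i)\Leftrightarrow(iii)$ and $(ii)\Leftrightarrow(iii)$ instead of the paper's cycle $(i)\Rightarrow(ii)\Rightarrow(iii)\Rightarrow(i)$, and you derive $(ii)$ from $(iii)$ via the factorization $f=qp$ and $f(T_p)=q(T_p)p(T_p)$ rather than from $(i)$ via right multiplication by powers of $t$ --- and your closing caveat that $(iii)$ only forces $p(T_p)$ to vanish on the basis, not on all of $A^n$, is exactly the right point to flag.
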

\begin{proof}
$(i)\Rightarrow (ii)$ Since $t\in Idl(Rp)$, $f\in Rp$ implies $ft^i \in Rpt^i\subset 
Rp$, for any $0\le i \le n-1$.  Theorem \ref{ring homomorphism and generalized product 
formula}$(4)$ then gives $((f(t)t^i)(T_p)(1,0,\dots,0)=(0,\dots,0)$.  Hence, $f(T_p)
(T_p^i(1,0,\dots,0))=(0,\dots,0)$, for $i\in\{0,\dots,n-1\}$.   This leads to 
$f((T_p))_\beta=0$, where $\beta$ is the standard basis of $A^n$.  The above lemma 
\ref{matrix of f(T)} shows that $0=f((T_p))_\beta=f((T_p)_\beta)=f(C)$, where $f(C)$ 
stands for the evaluation of 
$f(t)\in M_n(A)[t;\si,\de]$ at $C$.

\noindent $(ii)\Rightarrow (iii)$  This is clear.

\noindent $(iii)\Rightarrow (i)$ We have $0=p(C(p))=p(T_p)_\beta)=p((T_p)_\beta)=p(T_p)_\beta$.  Since $n>1$, we have, in particular, $(pt)(T_p)(e_1)=p(T_p)((T_p)(e_1))=p(T_p)(e_2)=0$.
  Theorem \ref{ring homomorphism and generalized product formula}(4) implies that $pt\in Rp$, as required.
\end{proof}

Let us sum up all the information that we have gathered in the 
special case where $V=R/R(t-a)$.
When, moreover, $A=K$ is a division ring, these results were proved 
in earlier papers (Cf. \cite{LL1}, \cite{LL2}, \cite{LLO}) using different, 
more computational proofs.    
$U(A)$ stands for the set of invertible elements of $A$.   
For $x\in U(A)$, we denote by $a^x$ the element $\si(x)ax^{-1} + \de(x)x^{-1}$ and 
$\Delta^{\si,\de}(a):=\{a^x\, \vert \, x\in U(A)\}$. 

\begin{corollary}
\label{the case of T_a}
Suppose $a\in A$ and $f,g\in R=A[t;\si,\de]$.  Let $V$ stand for the $R$-module $R/R(t-a)$.  Then: 

\begin{enumerate}
\item[(a)] The map $\Lambda_a: R \longrightarrow End(V,+)$ defined 
by $\Lambda_a(f)=f(T_a)$ is a ring homomorphism.
For $f,g\in R$, we have 
$(fg)(a)=f(T_a)(g(a))$.  
\item[(b)] Suppose $g(a)$ is invertible, then:
$(fg)(a)=f(a^{g(a)})g(a)$.  
In particular, for an invertible element $x\in A$ we have:
$f(T_a)(x)=f(a^x)x$.
\item[(c)] The set $C^{\si,\de}(a):= \{b\in A \, | \, ab=\si(b)a+\de(b)\}$ is a 
ring isomorphic to $End_RV$. 
\item[(d)] If $A$ is a division ring, then so is $C^{\si,\de}(a)$.  In this case, for
any $f(t)\in R$ and any $a\in A$, 
$\ker(f(T_a))=\{x\in A\setminus \{0\} \, |\, f(a^x)=0\}\cup \{0\}$ is a right 
$C^{\si,\de}(a)$-vector space.
\end{enumerate}
\end{corollary}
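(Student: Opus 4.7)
The plan is to specialize the machinery of Theorem~\ref{ring homomorphism and generalized product formula} and Corollary~\ref{module structure on kernels} to the case $p = t - a$ (so $n = 1$ and the companion matrix collapses to the scalar $a$), then supplement it with a single conjugation identity in $R$. Part (a) is then immediate: Corollary~\ref{ring homomorphism defined by a PLT} already gives that $\Lambda_a$ is a ring homomorphism, while Remark~\ref{composition and link}(2) identifies $f(T_a)(1)$ with $f(a)$; evaluating the equality $(fg)(T_a) = f(T_a)g(T_a)$ at $1 \in V$ produces the product formula $(fg)(a) = f(T_a)(g(a))$.

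The technical content lies in (b). First I would establish the identity
\[
(t - a^x)\, x \;=\; \si(x)(t - a) \qquad \text{in } R
\]
for every $x \in U(A)$ by a short direct computation using $a^x x = \si(x) a + \de(x)$. Given this, the right-division algorithm yields $q \in R$ with $f(t) = q(t)(t - a^x) + f(a^x)$, and right-multiplying by $x$ gives $f(t) x = q(t)\si(x)(t - a) + f(a^x) x$. Reducing modulo $R(t - a)$ collapses the first summand and leaves $f(T_a)(x) = f(a^x) x$, since $f(T_a)(x)$ is precisely the remainder of $f(t)x$ on right division by $t - a$. Applying this with $x = g(a)$ when invertible, and combining with (a), gives $(fg)(a) = f(a^{g(a)})\, g(a)$.

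For (c), Corollary~\ref{module structure on kernels}(a) applied with $p = t - a$ yields $End_R V \cong C_p^{\si,\de}$; since the companion matrix is just $a$, the defining condition $CB = \si(B)C + \de(B)$ specializes to $ab = \si(b)a + \de(b)$, identifying the eigenring with $C^{\si,\de}(a)$. For (d), when $A$ is a division ring, $V = R/R(t-a)$ is one-dimensional over $A$ and hence simple as an $A$-module; since every $R$-submodule is in particular an $A$-submodule, $V$ is also simple over $R$, and Schur's lemma makes $C^{\si,\de}(a) \cong End_R V$ a division ring. The description of $\ker f(T_a)$ then drops out of (b): every nonzero $x \in A$ is invertible, so $f(T_a)(x) = f(a^x) x$ vanishes iff $f(a^x) = 0$; the right $C^{\si,\de}(a)$-vector space structure is Corollary~\ref{module structure on kernels}(c) specialized to $n = 1$.

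The only step requiring an actual computation is the conjugation identity $(t - a^x)x = \si(x)(t-a)$ underlying (b); everything else is bookkeeping on top of the framework already in place, so that is where I expect any difficulty to lie.
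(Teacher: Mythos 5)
Your proposal is correct and follows essentially the same route as the paper: part (a) via Corollary~\ref{ring homomorphism defined by a PLT} and the product formula, part (b) via the identity $(t-a^x)x=\si(x)(t-a)$ combined with (a) applied to the constant $g=x$, part (c) via the eigenring description of Proposition~\ref{ring of endo}, and part (d) via Schur's lemma on the simple module $R/R(t-a)$. No gaps.
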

\begin{proof}
$(a)$  This is a special case of Corollary \ref{ring 
homomorphism defined by a PLT} and Theorem \ref{ring homomorphism and generalized product formula}$(2)$.  

\noindent $(b)$ It is easy to check that, for $x\in U(A)$, $(t-a^x)x=\si(x)(t-a)$.  This 
leads to $f(t)x-f(a^x)x=(f(t)-f(a^x))x\in R(t-a^x)x\subseteq R(t-a)$.  Hence, using $(a)$ 
above with $g(t)=x$, we have $f(a^x)x=(f(t)x)(a)=f(T_a)(x)$.  The other equality is now 
easy to check.

\noindent $(c)$  This comes directly from Proposition 
\ref{ring of endo}.

\noindent $(d)$ If $A$ is a division ring, $R(t-a)$ is a maximal left ideal of $R$ and
 Schur's lemma shows that $End_R(R/R(t-a))$ is a division ring.  The other statements are clear from our earlier results. 
\end{proof}

\begin{remark}
{\rm
In a division ring $K$, a $(\si,\de)$-conjugacy class
$\Delta^{\si,\de}(a)$ can be seen as a projective space associated to
$K$ considered as a right $C^{\si,\de}(a)$-vector space.  With
this point of view, for $f(t)\in R=K[t;\si,\de]$ without roots in
$\Delta^{\si,\de}(a)$, the projective map associated to the right $C^{\si,\de}
(a)$-linear map $f(T_a)$ is the map $\phi_f$ defined by
$\phi_f(a^x)=(a^x)^{f(a^x)}=a^{f(T_a(x))}$.  This map $\phi_f$ is useful
 for detecting pseudo-roots of a polynomial (i.e. elements $a\in K$
such that $t-a$ divides $gf\in R$ but $f(a)\ne 0$).  This point
of view sheds some lights on earlier results on $\phi$-transform
(Cf. \cite{LL3}).
}
\end{remark}


\begin{examples}
\label{examples usage of product formula}
{\rm
\begin{enumerate}
\item If $b-a\in A$ is invertible, it is easy to check that the 
polynomial $f(t):=(t-b^{b-a})(t-a)\in R=A[t;\si,\de]$ is a monic polynomial right
divisible by  $t-a$ and $t-b$.  $f(t)$ is thus the least left
common multiple (abbreviated LLCM in the sequel) of $t-a$ and $ t-b$ in
$R=A[t;\sigma,\delta]$.  Pursuing this theme further leads, in
particular, to noncommutative symmetric functions (Cf. \cite{DL}).
\item Similarly one easily checks that, if $f(a)$ is invertible then the
LLCM of $f(t)$ and $t-a$ in $R=A[t;\sigma,\delta]$ is given by
$(t-a^{f(a)})f(t)$.
\item It is now easy to construct polynomials that factor
completely in linear terms but have only one (right) root.  Let
$K$ be a division ring and $a\in K$ be an element algebraic
of degree two over the center $C$ of $K$.  We denote by $f_a(t)\in
C[t]$ the minimal polynomial of $a$. $f_a(t)$ is also the minimal
polynomial of the algebraic conjugacy class
$\Delta(a):=\{xax^{-1}\,|\,x\in K\setminus \{0\}\}$.   For $\gamma
\in \Delta(a)$, we note $\overline{\gamma}$ the unique element of
$K$ such that $f_a(t)=(t-\overline{\gamma})(t-\gamma)$. Let us
remark that if $\gamma\ne a$ then
$\overline{\gamma}=a^{a-\gamma}$.  
Using an induction on $m$, the reader can easily prove that if a polynomial
$g(t)$ is such that $g(t):=(t-a_m)(t-a_{m-1})\dots (t-a_1)$ where
$a_i\in \Delta(a)$ but $a_{i+1}\ne \overline{a_i}$, for
$i=1,\dots,m-1$ then $a_1$ is the unique root of $g(t)$.  
For a concrete example consider
$\mathbb H$, the division ring of quaternions over $\mathbb Q$. In
this case, for $a\in \mathbb H$, $\overline{a}$ is the usual
conjugate of $a$. Of course, one can generalize this example to a
$(\sigma,\delta)$-setting by considering an algebraic conjugacy
class of rank 2.
\item Let us describe all the irreducible polynomials
of $R:=\mathbb C[t;-]$.  First notice that the left (and right) Ore quotient ring 
$\mathbb C(t;-)$ of $R$ is a division ring of dimension $4$ over its center 
$\mathbb R(t^2)$.
This implies that any $f(t)\in \mathbb C[t;-]\setminus \mathbb R[t^2]$ satisfies an 
equation of the form:
$f(t)^2+a_1(t^2)f(t)+a_0(t^2)=0$ for some $a_1(t^2), a_0(t^2)\in \mathbb R(t^2)$ 
with $a_0(t^2)\ne 0$.  
This shows that for any polynomial $f(t) \in \mathbb C[t;-]\setminus 
\mathbb R[t]$ there exists $g(t)\in \mathbb C[t;-]$ such that 
$g(t)f(t)\in \mathbb R[t^2] \subset \mathbb R[t] \subset \mathbb C[t;-]$.  
In particular, the irreducible factors of $g(t)f(t)$ in $\mathbb 
C[t;-]$ are 
of degree $\le 2$.  
We can now conclude that the monic irreducible non linear polynomials 
of $\mathbb C[t;-]$ are the polynomials of the form $t^2+at+b$ with no (right) 
roots.   In other words the monic irreducible non linear polynomials of 
$\mathbb C[t;-]$ are of the form $t^2+at+b$ such that for any $c \in \mathbb C$, 
$c\overline{c}+ac +b \ne 0$. 

\end{enumerate}
}
\end{examples}

\vspace{2mm}

\noindent We now collect a few more observations. 

\begin{prop}
\label{pgcd and ppcm of PLT} 
Let $f,g\in R=A[t;\sigma,\delta]$ be polynomials such that $g$ is not a zero 
divisor and
$Rf+Rg=R$.   Suppose that there exists $m\in R$ with $Rm=Rf \cap Rg$.
Let $f',g'\in R$ be such that $m=f'g=g'f$.  
Let also $T$ be any pseudo-linear 
transformation.  We have:
\begin{enumerate}
\item[a)] $R/Rf'\cong R/Rf$.
\item[b)] $g(T)(\ker f(T))=\ker f'(T)$.
\item[c)] $\ker (m(T))=\ker f(T) \oplus \ker g(T)$.
\end{enumerate}
\end{prop}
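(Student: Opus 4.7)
The plan is to establish (a) by an explicit left module isomorphism, then deduce (b) functorially from (a), and finally combine (b) with a B\'ezout-style identity to obtain (c).

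For (a), I would define $\phi : R/Rf' \to R/Rf$ by $r + Rf' \mapsto rg + Rf$. The hypotheses make this well defined (since $Rf'g = Rm \subseteq Rf$) and clearly left $R$-linear. Injectivity uses $Rf \cap Rg = Rm = Rf'g$ together with the fact that $g$ is not a zero divisor: if $rg \in Rf$ then $rg \in Rf \cap Rg = Rf'g$, so $rg = sf'g$ for some $s$, and cancelling $g$ on the right yields $r = sf' \in Rf'$. Surjectivity uses a B\'ezout relation $uf + vg = 1$ coming from $Rf + Rg = R$: any $x + Rf$ equals $xvg + Rf = \phi(xv + Rf')$.

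For (b), one inclusion is direct: for $x \in \ker f(T)$, Corollary \ref{ring homomorphism defined by a PLT} together with $m = f'g = g'f$ gives $f'(T)g(T)(x) = (f'g)(T)(x) = (g'f)(T)(x) = g'(T)(0) = 0$. For the reverse, the key observation is the natural bijection $\ker h(T) \cong Hom_R(R/Rh, V)$, $\beta \leftrightarrow \beta(1 + Rh)$, valid for any $h \in R$ and the $R$-module structure on $V$ induced by $T$. Applying the contravariant functor $Hom_R(-, V)$ to $\phi$ yields an isomorphism $Hom_R(R/Rf, V) \to Hom_R(R/Rf', V)$, and tracking through the identifications, the induced bijection $\ker f(T) \to \ker f'(T)$ is precisely $w \mapsto g(T)(w)$, since $(\beta \circ \phi)(1 + Rf') = \beta(g + Rf) = g(T)(\beta(1 + Rf))$.

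For (c), the inclusion $\ker f(T) + \ker g(T) \subseteq \ker m(T)$ is immediate from the factorizations $m(T) = g'(T)f(T) = f'(T)g(T)$. The intersection $\ker f(T) \cap \ker g(T)$ is trivial because $u(T)f(T) + v(T)g(T) = \mathrm{id}_V$ annihilates any common element. For the remaining inclusion, given $v \in \ker m(T)$ note that $f'(T)(g(T)(v)) = m(T)(v) = 0$, so $g(T)(v) \in \ker f'(T)$; by (b) there exists $w \in \ker f(T)$ with $g(T)(w) = g(T)(v)$, whence $v - w \in \ker g(T)$ and $v = w + (v - w)$.

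The main obstacle is the reverse inclusion in (b). A direct computational attack using only $uf + vg = 1$ quickly stalls, because there is no obvious non-commutative identity expressing $fv$ as a left multiple of $f'$. The functorial argument via $Hom_R(-, V)$ circumvents this by promoting the concrete module isomorphism of (a) to the required bijection of kernels; once that is in hand, part (c) drops out by a routine B\'ezout splitting.
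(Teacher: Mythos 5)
Your proof is correct and follows essentially the same route as the paper: part (a) via the isomorphism $1+Rf'\mapsto g+Rf$, part (b) by transporting kernels along that isomorphism (the paper does this concretely, choosing a representative $h$ of $\varphi^{-1}(1+Rf)$ and checking $(gh)(T)=\mathrm{id}$ on $\ker f'(T)$, rather than phrasing it through $Hom_R(-,V)$, but once unpacked the two computations are identical), and part (c) by the same B\'ezout splitting. You supply somewhat more detail than the paper does in (a) and in the directness of the sum in (c), which is welcome.
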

\begin{proof}
$a)$ The morphism  $\varphi: R/Rf' \longrightarrow R/Rf$  of left $R$-modules 
defined by $\varphi (1 + Rf')= g+Rf$ is in fact an isomorphism.

\noindent $b)$ Since $f'g=g'f$, we have $(f'g)(T)(\ker f(T))=0$.  Hence $g(T)(\ker f(T))
\subseteq \ker f'(T)$.  Let $\varphi$ be the map defined in the proof of $a)$ above and let $h\in R$ 
be such that $\varphi^{-1}(1+Rf)=h +Rf'$.  Since 
$\varphi^{-1}$ is well defined, we have $fh\in Rf'$ and $h(T)(\ker f'(T)\subseteq 
\ker f(T)$.  We also have $gh-1\in Rf'$ and so $(gh)(T)\vert _{\ker f'(T)}=id.
\vert_{\ker f'(T)}$.  
This gives $\ker f'(T)=gh(T) (\ker f'(T))\subseteq g(T) (\ker f(T))\subseteq 
\ker f'(T)$.  This yields the desired conclusion.

\noindent $c)$ Obviously $\ker g(T) + \ker f(T) \subseteq \ker (m(T))$.
Now let $v\in \ker m(T)$.  Then $f'g(T)(v)=0=g'f(T)(v)$.  
This gives $g(T)(v)\in \ker f'(T)$ 
and so, using the equality $b)$ above, we have $g(T)(v)\in g(T)(\ker f(T))$.  
This shows that there exists $w\in \ker f(T)$ such that $g(T)(v)=g(T)(w)$.  We conclude
$v-w\in \ker g(T)$ and $v\in \ker g(T) + \ker f(T)$.  The fact that the sum is direct is clear from the 
equality $R=Rf + Rg$.
\end{proof}

As an application of the preceding proposition, we have a relation 
between the roots of two similar polynomials with coefficients in a division ring.  
For $f\in K[t;\si,\de]$, where $K$ is a division ring,  
we denote by $V(f)$ the set of right roots of $f$.  
For $x\notin V(f)$  we put $\phi_f(x):=x^{f(x)}:=\si(f(x))xf(x)^{-1}+\de(f(x))f(x)^{-1}$.  With these 
notations we have the following corollary of the previous proposition:

\begin{corollary}
Let $f,f'\in K[t;\si,\de]$ be such that $\varphi: R/Rf' \longrightarrow R/Rf$ is an 
isomorphism defined by $\varphi (1+Rf')=g+Rf$.  Then $V(f')=\phi_g(V(f))$.
\end{corollary}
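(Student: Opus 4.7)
The plan is to prove the equality by double inclusion, using Proposition \ref{pgcd and ppcm of PLT} together with the machinery of Corollary \ref{the case of T_a}.

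First, set up the hypotheses of Proposition \ref{pgcd and ppcm of PLT} from the isomorphism data. Well-definedness of $\varphi$ forces $f'g = g'f$ for some $g' \in R$, while surjectivity gives $h,r\in R$ with $hg - rf = 1$, so $Rf + Rg = R$. Since $K[t;\sigma,\delta]$ is a left Ore domain, $Rf\cap Rg = Rm$ for $m=f'g=g'f$, matching the setup of the proposition. In particular, $V(f)\cap V(g)=\emptyset$: a common root $a$ evaluated in $1 = hg-rf$ via Corollary \ref{the case of T_a}(a) would yield $1=h(T_a)(g(a))-r(T_a)(f(a))=0$. So $\phi_g$ is well defined on $V(f)$.

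For $\phi_g(V(f))\subseteq V(f')$, take $a\in V(f)$. From $f'g=g'f$ and Corollary \ref{the case of T_a}(a),
$$f'(T_a)(g(a)) = (f'g)(a) = (g'f)(a) = g'(T_a)(f(a)) = 0.$$
Since $g(a)\neq 0$ is invertible, Corollary \ref{the case of T_a}(b) rewrites the left side as $f'(a^{g(a)})g(a) = f'(\phi_g(a))g(a)$, so $f'(\phi_g(a))=0$.

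For $V(f')\subseteq \phi_g(V(f))$, take $b\in V(f')$ and apply Proposition \ref{pgcd and ppcm of PLT}(b) with the pseudo-linear transformation $T=T_b$: $g(T_b)(\ker f(T_b))=\ker f'(T_b)$. Since $f'(T_b)(1)=f'(b)=0$, we get $x\in \ker f(T_b)$ with $g(T_b)(x)=1$. Necessarily $x\neq 0$, and by Corollary \ref{the case of T_a}(d) this means $b^x\in V(f)$. Corollary \ref{the case of T_a}(b) converts $g(T_b)(x)=1$ into $g(b^x)x = 1$, i.e.\ $g(b^x)=x^{-1}$. Therefore $\phi_g(b^x)=(b^x)^{g(b^x)}=(b^x)^{x^{-1}}$, and I would close the argument by verifying the cocycle identity $(b^x)^{x^{-1}}=b$.

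The main (and only nontrivial) obstacle is this last cocycle identity for $(\sigma,\delta)$-conjugation. It is a direct computation: expanding $(b^x)^{x^{-1}}$ from the definition yields $b + \sigma(x^{-1})\delta(x) + \delta(x^{-1})x$, and the relation $\delta(x^{-1})=-\sigma(x^{-1})\delta(x)x^{-1}$, obtained from differentiating $xx^{-1}=1$, cancels the two error terms. Aside from this small algebraic verification, the entire argument is a direct deployment of the results already proven in the paper.
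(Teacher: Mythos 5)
Your proof is correct and takes essentially the same route as the paper's: the forward inclusion by evaluating $f'g=g'f$ with the product formula of Corollary \ref{the case of T_a}, and the reverse inclusion by applying Proposition \ref{pgcd and ppcm of PLT}(b) to $T_y$ for $y\in V(f')$ to produce $z\in\ker f(T_y)$ with $g(y^z)z=1$. Your only additions are bookkeeping the paper leaves implicit --- deriving $Rf+Rg=R$ and $f'g=g'f$ from the isomorphism data, and verifying the cocycle identity $(b^x)^{x^{-1}}=b$ --- all of which check out.
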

\begin{proof}

Since $Rf+Rg=R$, $g(x)\ne 0 $ for any $x\in V(f)$ and we have: $f'(\phi_g(x))
g(x)=(f'g)(x)=(g'f)(x)=0$.   This shows that $\phi_g(V(f))\subseteq V(f')$.  
For the reverse inclusion let us remark that $y\in V(f')$ implies that $1\in 
\ker f'(T_y)$ the assertion $b)$ in the above proposition \ref{pgcd and ppcm of 
PLT} shows that there exists $z\in \ker f(T_y)$ such that $1= g(T_y)
(z)=g(y^z)z$.  An easy computation then gives that $y=\phi_g(y^z)$.  Since 
 $f(T_y)(z)=0$ implies $f(y^z)=0$, we conclude that $V(f')\subseteq \phi_g(V(f))$, as required.  
\end{proof}

\section{Applications}

Statement 1 of the following theorem is more
general and more precise than the classical
Gordon-Motzkin result (which is statement 1 of Theorem \ref{generalization of Gordon-Motzkin}  
with $(\sigma,\delta)=(id., 0)$ ).  This was already
mentioned in \cite{LLO} but we will state it in the language of
the maps $T_a$ introduced in Section $1$.   
For an element $a$ in a division ring $K$, we define 
$C^{\sigma,\delta}(a):=\{0\ne x\in K \,|\, a^x=a
\}\cup \{0\}$ (Cf. Section 1) and
$\Delta^{\si,\de}(a):=\{x\in K\setminus \{0\} \, | \, \si(x)a +\de(x)=ax \}$.  
$C^{\si,\de}(a)$ is a subdivision ring of $K$ and for any $f(t)\in R=K[t;\si,\de]$, 
$f(T_a)$ is a right $C^{\sigma,\delta}(a)$-linear map 
(Cf. Corollary \ref{the case of T_a}(d)).  
The set $\Delta(a)=\Delta^{\si,\de}(a)$ is the $(\si,\de)$-conjugacy class determined by $a$.
\begin{theorem}
\label{generalization of Gordon-Motzkin}

Let $f(t)\in R=K[t;\sigma,\delta]$ be a polynomial of degree $n$.
Then:
\begin{enumerate}
\item[1)] $f(t)$ has roots in at most $n$ $(\sigma,
\delta)$-conjugacy classes, say $\{\Delta
(a_1),\dots,\Delta(a_r)\}$, $r\le n$;
\item[2)] $\sum_{i=1}^rdim_{C(a_i)}\ker (f(T_{a_i}))\le n$, where 
$C(a_i):=C^{\si,\de}(a_i)$ for $1\le i \le r$.
\end{enumerate}
\end{theorem}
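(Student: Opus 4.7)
The strategy is to proceed by induction on $n=\deg f$. The base case (and more generally the case $V(f)=\emptyset$) is vacuous, so I may assume that $f$ has a root $a\in K$ and write $f=g(t-a)$ for some $g\in R$ of degree $n-1$. The plan is then to transport both claims from $g$ to $f$ via the product formula in Corollary~\ref{ring homomorphism defined by a PLT} together with the evaluation-of-a-product formula in Corollary~\ref{the case of T_a}(b).

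For statement~$(1)$, I would show that every $(\sigma,\delta)$-conjugacy class meeting $V(f)$ is either $\Delta(a)$ itself or a class meeting $V(g)$. Indeed, if $c$ is a root of $f$ with $c\neq a$, then $c-a$ is a unit of the division ring $K$, and Corollary~\ref{the case of T_a}(b) applied to $f=g(t-a)$ yields $0=f(c)=g(c^{\,c-a})(c-a)$, forcing $g(c^{\,c-a})=0$. Since $c^{\,c-a}\in\Delta(c)$, the class of $c$ meets $V(g)$. Combined with the inductive bound on the number of classes meeting $V(g)$, this gives $r\le 1+(n-1)=n$.

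For statement~$(2)$, Corollary~\ref{ring homomorphism defined by a PLT} gives $f(T_{a_i})=g(T_{a_i})\circ(t-a)(T_{a_i})$ as right $C(a_i)$-linear maps on $K$. The inclusion $(t-a)(T_{a_i})(\ker f(T_{a_i}))\subseteq\ker g(T_{a_i})$ yields the dimension inequality
\begin{equation*}
\dim_{C(a_i)}\ker f(T_{a_i})\;\le\;\dim_{C(a_i)}\ker(t-a)(T_{a_i})+\dim_{C(a_i)}\ker g(T_{a_i}).
\end{equation*}
By Corollary~\ref{the case of T_a}(d), $\ker(t-a)(T_{a_i})=\{y\ne 0:a_i^{y}=a\}\cup\{0\}$ is zero unless $\Delta(a_i)=\Delta(a)$; taking $a_1=a$, this remaining kernel is $C^{\sigma,\delta}(a)$, of dimension~$1$. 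Summing over $i$, the $(t-a)$-contribution is exactly $1$, while the $g$-contribution is bounded above by the analogous sum over all classes meeting $V(g)$, since classes $\Delta(a_i)$ disjoint from $V(g)$ contribute $0$ by the same Corollary~\ref{the case of T_a}(d). The inductive hypothesis applied to $g$ then bounds the total by $1+(n-1)=n$.

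The principal point to verify with care is the kernel-of-composition inequality invoked above: it is elementary, but it must be applied to right $C(a_i)$-linear maps on the possibly infinite-dimensional right $C(a_i)$-vector space $K$, and it is the engine driving the induction. A secondary bookkeeping issue---matching the sum over classes meeting $V(f)$ with the inductive sum over classes meeting $V(g)$---is handled transparently by Corollary~\ref{the case of T_a}(d): a class in which $g$ has no root contributes $0$, so nothing is lost by restricting to the classes that already appear on the $f$-side.
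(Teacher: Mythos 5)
Your argument is correct, but it cannot be ``the same as the paper's'' because the paper gives no proof at all for Theorem~\ref{generalization of Gordon-Motzkin}: it simply defers to \cite{LLO} and \cite{LO}, where the dimension bound in statement~2) is obtained through the machinery of $P$-independent sets and ranks of Vandermonde/Wronskian matrices. Your induction on $\deg f$ via a factorization $f=g(t-a)$ is a genuinely different and more self-contained route: statement~1) is exactly the classical Gordon--Motzkin descent, transported to the $(\si,\de)$-setting by the exponent formula $f(c)=g(c^{\,c-a})(c-a)$ of Corollary~\ref{the case of T_a}(b), and statement~2) follows from $f(T_{a_i})=g(T_{a_i})\circ(t-a)(T_{a_i})$ (Corollary~\ref{ring homomorphism defined by a PLT}) together with the subadditivity of kernel dimension under composition, which is justified by the exact sequence $0\to\ker\bigl((t-a)(T_{a_i})\bigr)\to\ker f(T_{a_i})\to\ker g(T_{a_i})$ of right $C(a_i)$-spaces and so causes no difficulty even though $K$ may be infinite-dimensional over $C(a_i)$. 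What your approach buys is a proof using only the tools already developed in Section~1 of this paper; what it does not give is the extra information of the cited references (e.g.\ the characterization of when equality holds, mentioned in the remark following the theorem). One small point you pass over too quickly: to compare $\sum_i\dim_{C(a_i)}\ker g(T_{a_i})$ with the inductive bound for $g$, you need $\dim_{C(b)}\ker g(T_b)$ to depend only on the class $\Delta(b)$ and not on the chosen representative (a class may meet $V(g)$ in points other than the $a_i$ you fixed for $f$). This is true --- if $b=a^u$ then $x\mapsto xu$ is a bijection $\ker g(T_{a^u})\to\ker g(T_a)$ semilinear over the isomorphism $C(a^u)\to C(a)$, $c\mapsto u^{-1}cu$ --- or can be sidestepped by choosing the representatives in the inductive application of the theorem to $g$ to agree with the $a_i$ on common classes; either way it deserves a sentence.
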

\begin{proof}
We refer the reader to \cite{LLO} and \cite{LO}.
\end{proof}

\begin{remark}
In \cite{LLO} it is shown that equality in formula $2)$ holds if and
only if the polynomial $f(t)$ is Wedderburn.
\end{remark}

We now offer an application of the previous Theorem
\ref{generalization of Gordon-Motzkin}.

 In coding theory some authors have used Ore extensions to define
noncommutative codes (Cf. \cite{BU}, \cite{BGU}).  In particular,
letting $\mathbb F_q$ be the finite field of characteristic $p$ with
$q=p^n$ elements, they considered the Ore extension of the form
$\mathbb F_q[t;\theta]$, where $\theta$ is the usual Frobenius
automorphism given by $\theta (x)=x^p$.  The following theorem
shows that the analogue of the usual minimal polynomial
$X^q-X \in \mathbb F_q[X]$ annihilating $\mathbb F_q$ is of much
lower degree in this noncommutative setting.

\begin{theorem}
\label{finite fields}

Let $p$ be a prime number and $\mathbb F_q$ be the finite field
with $q=p^n$ elements.  Denote by $\theta$ the Frobenius
automorphism.  Then:
\begin{enumerate}
\item[a)] There are $p$ distinct $\theta$-conjugacy classes in $\mathbb F_q$.
\item[b)] $C^{\theta}(0)=\mathbb F_q$ and, for $0\ne a\in \mathbb F_q$, we have
 $C^{\theta}(a)=\mathbb F_p$.
\item[c)] In $\mathbb F_q[t;\theta]$, the least left common multiple of all the elements
of the form $t-a$ for $ a\in\mathbb F_q$ is the polynomial
$G(t):=t^{(p-1)n +1}-t$.  In other words, $G(t)\in \mathbb
F_q[t;\theta]$ is of minimal degree such that $G(a)=0$ for all
$a\in \mathbb F_q$.
\item[d)] The polynomial $G(t)$ obtained in c) above is invariant,
i.e. $RG(t)=G(t)R$.
\end{enumerate}
\end{theorem}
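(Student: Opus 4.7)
Since $\delta = 0$, the $\theta$-conjugate $a^x = \theta(x)ax^{-1}$ simplifies to $x^{p-1}a$ (using commutativity of $\mathbb F_q$), and the defining equation of $C^\theta(a)$ reduces to $ab = b^p a$. For part (a) I would observe that the nonzero classes are exactly the cosets of $H := \{x^{p-1} : x \in \mathbb F_q^*\}$ in $\mathbb F_q^*$, whose index equals $\gcd(p-1, p^n-1) = p-1$; together with the singleton $\{0\}$ this yields $p$ classes. For (b), the equation $ab = b^p a$ is vacuous when $a = 0$, giving $C^\theta(0) = \mathbb F_q$, while for $a \neq 0$ it forces $b = b^p$, hence $b \in \mathbb F_p$.

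For (c) I would prove matching lower and upper bounds on the degree of the LLCM. The lower bound follows from Theorem \ref{generalization of Gordon-Motzkin}(2): for any $f(t)$ vanishing on all of $\mathbb F_q$, Corollary \ref{the case of T_a}(d) gives $\ker f(T_{a_i}) = \mathbb F_q$ for each of the $p$ class representatives, so the dimension sum equals $\dim_{\mathbb F_q}\mathbb F_q + (p-1)\dim_{\mathbb F_p}\mathbb F_q = 1 + (p-1)n$, forcing $\deg f \ge (p-1)n+1$. For the upper bound I would exhibit $G(t) = t^{(p-1)n+1}-t$ and verify $G(a) = 0$ for every $a \in \mathbb F_q$. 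With $\delta = 0$, an easy induction shows that $t^k$ evaluates at $a$ to $a\,\theta(a)\cdots\theta^{k-1}(a)$. Grouping the $(p-1)n+1$ Frobenius powers into $p-1$ full blocks of length $n$ plus one leftover factor and using $\theta^n = \mathrm{id}_{\mathbb F_q}$, one obtains $t^{(p-1)n+1}(a) = \nu(a)^{p-1}\,a$, where $\nu(a) := a\theta(a)\cdots\theta^{n-1}(a) \in \mathbb F_p$ is the absolute norm. For $a \neq 0$, Fermat's little theorem applied to $\nu(a) \in \mathbb F_p^*$ gives $\nu(a)^{p-1} = 1$ and hence $G(a) = 0$; the case $a = 0$ is immediate. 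Thus the monic polynomial $G$ attains the lower bound and is necessarily the LLCM.

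For (d) I would check directly that $G(t)$ normalizes the generators of $R$. The relation $Gt = tG$ is obvious. For $a \in \mathbb F_q$, since $(p-1)n+1 \equiv 1 \pmod n$ and $\theta$ has order $n$, we get $t^{(p-1)n+1}\cdot a = \theta^{(p-1)n+1}(a)\,t^{(p-1)n+1} = \theta(a)\,t^{(p-1)n+1}$, matching the twist on $ta$; hence $G(t)\cdot a = \theta(a)\,G(t) \in RG$, and symmetrically $a\,G(t) = G(t)\,\theta^{-1}(a) \in GR$, so $RG = GR$.

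The substantive step is the evaluation of $G$ on $\mathbb F_q$ in part (c): the norm-factorization trick, which collapses the string of Frobenius twists into $\nu(a)^{p-1}\cdot a$, is what makes both the root calculation and the twist identity $\theta^{(p-1)n+1} = \theta$ in (d) almost effortless. Parts (a), (b) and the Gordon--Motzkin lower bound in (c) are then largely book-keeping.
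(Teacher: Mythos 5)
Your proposal is correct and follows essentially the same route as the paper: the lower bound on the degree via Theorem \ref{generalization of Gordon-Motzkin}(2) applied to the $p$ conjugacy-class representatives, and the upper bound by collapsing the product $\theta^{(p-1)n}(a)\cdots\theta(a)a$ into $\nu(a)^{p-1}a$ using that the norm lies in $\mathbb F_p$. The remaining parts (counting classes via the index of $\{x^{p-1}\}$ in $\mathbb F_q^*$, computing centralizers, and checking invariance from $\theta^n=\mathrm{id}$) also match the paper's argument.
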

\begin{proof}
a)   Let us denote by $g$ a generator of the cyclic group $\mathbb
F_q^{*}:=\mathbb F_q \setminus\{0\}$. The $\theta$-conjugacy class
determined by the zero element is reduced to $\{0\}$ i.e. $\Delta (0)=\{0\}$.
The $\theta$-conjugacy class determined by $1$ is a subgroup of $\mathbb
F_q^*$: $\Delta (1)=\{\theta (x)x^{-1}\,|\, 0\ne x\in \mathbb F_q
\}=\{x^{p-1}\,|\, 0\ne x\in \mathbb F_q\}$.  It is easy to check
that $\Delta (1)$ is cyclic generated by $g^{p-1}$ and has order
$\frac{p^n-1}{p-1}$.  Its index is $(\mathbb F_q^* : \Delta
(1))=p-1$.   Since two nonzero elements $a,b$ are
$\theta$-conjugate if and only if $ab^{-1}\in \Delta (1)$,  we
indeed get that the number of different nonzero $\theta$-conjugacy classes
is $p-1$.   This yields the result.

\noindent b) If $a\in \mathbb F_q$ is nonzero, then
$C^{\theta}(a)=\{x\in \mathbb F_q\,|\, \theta(x)a=ax\}$ i.e.
$C^{\theta}(a)=\mathbb F_p$.

\noindent c)  We have, for any $x\in \mathbb F_q,\,
(t^{(p-1)n+1}-t)(x)=\theta^{(p-1)n}(x)\dots \theta(x)x - x$. Since
$\theta^n=id.$ and $N_n(x):=\theta^{n-1}(x)\dots
\theta(x)x \in \mathbb
F_p$ , we get $(t^{(p-1)n+1}-t)(x)=
x(\theta^{n-1}(x)\dots\theta(x)x )^{p-1}-x=
xN_n(x)^{p-1}-x=0$.  This shows that indeed $G(t)$ 
annihilates all the elements
of $\mathbb F_q$ and hence $G(t)$ is a  left common multiple of
the linear polynomials $\{(t-a)\,|\,a\in \mathbb F_q\}$.  Let $h(t):=[t-a\,|\,a\in \mathbb F_q]_l$ denote their least left common multiple. It remains to show that
$\deg h(t)\ge n(p-1)+1$.  Let $0=a_0,a_1,\dots,a_{p-1}$ be
elements  representing the $\theta$-conjugacy classes (Cf. a)
above).  Denote by $C_0,C_1,\dots,C_{p-1}$ their respective
$\theta$-centralizer.  Corollary \ref{the case of T_a}(b) shows that 
$h(T_a)(x)=h(a^x)x=0$ for any nonzero element $x\in \mathbb F_q$ and any element 
$a\in\{a_0,\dots,a_{p-1}\}$.   
Hence $\ker h(T_{a_i})=\mathbb F_q$ for $0\le i \le p-1$. Using
the inequality $2)$ in Theorem \ref{generalization of
Gordon-Motzkin} and the statement $b)$ above, we get $\deg h(t)\ge
\sum_{i=0}^{p-1} dim _{C_i}\ker h(T_{a_i})=dim_{\mathbb F_q}\mathbb
F_q + \sum_{i=1}^{p-1}dim_{\mathbb F_p}\mathbb F_q =1+(p-1)n$, as
required.

\noindent d)  Since $\theta^n=id.$, we have immediately that
$G(t)x=\theta(x)G(t)$ and obviously $G(t)t=tG(t)$.
\end{proof}

\begin{remark}
The polynomial $G(t)=t^{n(p-1)+1}-t\in \mathbb F_{p^n}[t;\theta]$
defined in the previous theorem \ref{finite fields} can have roots
in an extension $\mathbb F_{p^l} \varsupsetneq\mathbb F_{p^n}$.  This
is indeed always the case if $l=n(p-1)$.  Let us denote by
$\Delta_l(1):=\{1^x\,|\, 0\ne x\in \mathbb F_{p^l}\}$ and
$\Delta_n(1):=\{1^x\,|\, 0\ne x\in \mathbb F_{p^n}\}$.   Since
$\theta^l=id.$ on $\mathbb F_{p^l}$, we have $G(t)a=\theta(a)G(t)$
for any $a\in \mathbb F_{p^l}$. This gives, for any $0\ne x\in
\mathbb F_{p^l}$
$G(1^x)x=(G(t)x)(1)=(\theta(x)G(t))(1)=\theta(x)G(1)=0$.  In other
words $G(t)$ annihilates the $\theta$-conjugacy class $\Delta_l
(1)\subseteq \mathbb F_{p^l}$.  It is easy to check that
$|\Delta_l(1)|=\frac{p^l-1}{p-1}>\frac{p^n-1}{p-1}=|\Delta_n(1)|$.
We conclude that $G(t)$ has roots in $\mathbb F_{p^l}\setminus
\mathbb F_{p^n}$.  This contrasts with the classical case where
$[x-a\,|\,a\in \mathbb F_{p^n}]_l=x^{p^n}-x\in \mathbb
F_{p^n}[x]$ has all its roots in $\mathbb F_{p^n}$.
\end{remark}

For a prime $p$ and an integer $i\ge 1$, we define $[i]:=\frac{p^i-1}
{p-1}=p^{i-1}+p^{i-2}+\dots +1$ and put $[0]=0$.  We fix an integer $n\ge 1$ and continue to denote $q=p^n$.  Let us introduce the following subset of $\mathbb F_q[x]$:

$$
\mathbb F_q[x^{[]}]:=\{ \sum_{i\ge 0} \alpha_i x^{[i]} \in \mathbb F_q[x] \}
$$

A polynomial belonging to this set will be called a $[p]$-polynomial.
We extend $\theta$ to the ring $\mathbb F_q[x]$ and put $\theta (x)=x^p$ i.e. $\theta (g)=g^p$ 
for all $g\in \mathbb F_q[x]$.  We thus have $R:=\mathbb F_q[t;\theta]\subset S:=\mathbb F_q[x][t;\theta]$.
Considering $f\in R:=\mathbb F_q[t;\theta]$ as an element of 
$\mathbb F_q[x][t;\theta]$ we can evaluate $f$ at $x$.  
We denote the resulting polynomial by 
$f^{[]}[x]\in \mathbb F_q[x]$ i.e. $f(t)(x)=f^{[]}(x)$.  

The last statement of the following theorem will show that 
the question of the irreducibility of a polynomial $f(t)\in R:=\mathbb F_q[t;\theta]$ can 
be translated in terms of factorization in $\mathbb F_q[x]$.  This makes Berlekamp 
algorithm available to test irreducibility of polynomials in $R=\mathbb F_q[t;\theta]$.  
This will also provide an algorithm for factoring polynomials in $\mathbb F_q[t;\theta]$,
as explained in the paragraph following the proof of the next theorem.

\begin{theorem}
\label{factorization without theta}
Let $f(t)=\sum_{i=0}^na_it^i$ be a polynomial in $R:=\mathbb F_q[t;\theta]\subset  
S:=\mathbb F_q[x][t;\theta]$.   With the above notations we have:
\begin{enumerate}
\item[1)] For any $h=h(x)\in \mathbb F_q[x],\; f(h)=\sum_{i=0}^n a_ih^{[i]}$. 
\item[2)] $\{f^{[]}\vert f \in R=\mathbb F_q[t;\theta] \}=\mathbb F_q[x^{[]}]$.
\item[3)] For $i\ge 0$  and  $h(x)\in \mathbb F_q[x]$ we have $T_x^i(h)=h^{p^i}x^{[i]}$.
\item[4)] For $g(t)\in S=\mathbb F_q[x][t;\theta]$ and $h(x)\in \mathbb F_q[x]$ we have 
$g(T_x)(h(x))\in \mathbb F_q[x]h(x)$.
\item[5)] For any $h(t)\in R=\mathbb F_q[t;\theta]$, $f(t) \in Rh(t)$ if and only 
if $f^{[]}(x) \in \mathbb F_q[x]h^{[]}(x)$.
\end{enumerate}
\end{theorem}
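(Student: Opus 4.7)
The plan is to establish the five parts in order, since (1)--(4) are computational preliminaries feeding into the substantive assertion (5). For (1), I view $f$ as sitting inside the larger Ore extension $S = \mathbb F_q[x][t;\theta]$, where $\theta$ has been extended to $\mathbb F_q[x]$ by $\theta(x) = x^p$ with zero derivation. The standard evaluation formula then gives $t^i(h) = \theta^{i-1}(h)\cdots\theta(h)\,h = h^{p^{i-1}+\cdots+p+1} = h^{[i]}$ for $h \in \mathbb F_q[x]$, and since the scalars $a_i$ lie on the left, evaluation is additive in them, yielding $f(h) = \sum a_i h^{[i]}$. Part (2) follows at once by specializing $h = x$.

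For (3), a short induction on $i$ suffices: since $\delta = 0$, $T_x$ acts by $T_x(u) = \theta(u)\,x = u^p x$, and the induction step computes $T_x^{i+1}(h) = T_x(h^{p^i} x^{[i]}) = (h^{p^i} x^{[i]})^p x = h^{p^{i+1}} x^{p[i]+1} = h^{p^{i+1}} x^{[i+1]}$, using the identity $p[i] + 1 = [i+1]$. For (4), I expand $g = \sum b_i(x) t^i$ and apply (3) to obtain $g(T_x)(h) = \sum b_i(x)\,h^{p^i} x^{[i]}$; since $h$ divides $h^{p^i}$ in $\mathbb F_q[x]$ for every $i \ge 0$, the entire sum factors as a polynomial multiple of $h$.

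The substantive part is (5). In the forward direction, I would factor $f = gh$ in $R$ and invoke the product formula of Corollary \ref{the case of T_a}(a) inside $S$ at the point $x \in \mathbb F_q[x]$: $f^{[]}(x) = (gh)(x) = g(T_x)(h(x)) = g(T_x)(h^{[]}(x))$, which lies in $\mathbb F_q[x]\,h^{[]}(x)$ by (4). The converse requires a degree argument: perform right Euclidean division $f = qh + r$ in $R$ with $\deg_t r < \deg_t h$, evaluate at $x$ in $S$ (using additivity of evaluation) to obtain $f^{[]}(x) = q(T_x)(h^{[]}(x)) + r^{[]}(x)$, and combine with (4) and the hypothesis to deduce $r^{[]} \in \mathbb F_q[x]\,h^{[]}$. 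The main obstacle is then forcing $r = 0$, for which the key observation is that $i \mapsto [i]$ is strictly increasing, so the map $u \mapsto u^{[]}$ sends a nonzero element of $R$ of $t$-degree $k$ to a polynomial in $\mathbb F_q[x]$ of $x$-degree exactly $[k]$ (the leading $\mathbb F_q$-coefficient is preserved because the monomials $x^{[i]}$ have distinct degrees). Hence $r \ne 0$ would yield $\deg r^{[]} = [\deg_t r] < [\deg_t h] = \deg h^{[]}$, contradicting $h^{[]} \mid r^{[]}$; therefore $r = 0$ and $f = qh \in Rh$, as required.
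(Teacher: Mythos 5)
Your proof is correct and follows essentially the same route as the paper: the same evaluation formula for part (1), the same induction with $p[i]+1=[i+1]$ for part (3), the product formula of Corollary \ref{the case of T_a}(a) for the forward direction of (5), and the same Euclidean division plus degree comparison (using that the exponents $[i]$ are distinct, so $r\mapsto r^{[]}$ is degree-faithful on nonzero polynomials) for the converse. No gaps.
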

\begin{proof}
\noindent 1)
 We compute: $f(t)(h)=(\sum_{i=0}^na_it^i)(h)=
\sum_{i=0}^na_i\theta^{i-1}(h)\cdots \theta (h)h=\sum_{i=0}^na_ih^{[i]}$.  

\noindent 2) This is clear from the statement 1) above for $h=x$.

\noindent 3) This is easily proved by induction (notice that 
$T_x^0(h)=h=h^{p^0}x^{[0]}$).

\noindent 4) Let us put $g(t)=\sum_{i=0}^ng_i(x)t^i$.  Statement 3) above then gives: 
$g(T_x)(h(x))=
(\sum_{i=0}^ng_i(x)T_x^i)(h(x))=\sum_{i=0}^ng_i(x)h^{p^i}x^{[i]}\in \mathbb F_q[x]h$.

\noindent 5) Let us write $f(t)=g(t)h(t)$ in $R$.   Corollary \ref{the case of T_a} (a)   
and statement 4) above give $f^{[]}(x)=f(t)(x)=
(g(t)h(t))(x)=g(T_x)(h(t)(x))=g(T_x)(h^{[]}(x))\in \mathbb F_q[x]h^{[]}(x)$.

\noindent Conversely, suppose there exists $g(x) \in \mathbb F_q[x]$ such that 
$f^{[]}(x)=g(x)h^{[]}(x)$.  Let $f(t), h(t) \in \mathbb F_q[t;\theta]$ be such that 
$f(t)(x)=f^{[]}(x)$ and $h(t)(x)=h^{[]}(x)$.  Using the euclidean division algorithm
in $\mathbb F_q[t;\theta]$ we can write $f(t)=q(t)h(t) + r(t)$ 
with $\deg r(t) < \deg h(t)$.  Evaluating both sides of this equation at $x$ 
we get, thanks to the generalized product formula, 
$f^{[]}(x)=f(t)(x)=q(T_x)(h(t)(x))+r(t)(x)=q(T_x)(h^{[]}(x))+r^{[]}(x)$ and 
$\deg r^{[]}(x)=[\deg r(t)] < [\deg h(t)]= \deg h^{[]}(x)$.  
Statement 4) above and the hypothesis then give that $r^{[]}(x)=0$.  
Let us write $r(t)=\sum_{i=0}^lr_it^i\in \mathbb F_q[t;\theta]$.  With these notations we 
must have $\sum_{i=0}^lr_ix^{[i]}=0$.  This yields that for all $i\ge 0, \;r_i=0$ and 
hence $r(t)=0$, as required.   
\end{proof}

Let us mention the following obvious but important corollary:

\begin{corollary}
\label{criterion of irreducibility for Ore polynomials over finite fields}
A polynomial $f(t)\in \mathbb F_q[t;\theta]$ is irreducible if and only if its 
attached $[p]$-polynomial $f^{[]}\in \mathbb F_q[x^{[]}]\subset 
\mathbb F_q[x]$ has no non trivial factor belonging to 
$\mathbb F_q[x^{[]}]$.
\end{corollary}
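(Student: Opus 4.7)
The plan is to exploit Theorem \ref{factorization without theta} as a dictionary between right divisibility in $R = \mathbb F_q[t;\theta]$ and ordinary divisibility by $[p]$-polynomials in $\mathbb F_q[x]$. Part (5) provides the logical equivalence $f \in Rh \Leftrightarrow f^{[]} \in \mathbb F_q[x]\,h^{[]}$, and part (2) identifies the image of the map $h \mapsto h^{[]}$ as exactly $\mathbb F_q[x^{[]}]$. These two facts together should reduce the corollary to bookkeeping about what ``trivial factor'' means on each side, which in turn comes down to a degree count.

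For the forward implication, assume $f(t)$ is irreducible in $R$ and let $g(x)\in \mathbb F_q[x^{[]}]$ be any factor of $f^{[]}(x)$. Using (2) I would write $g = G^{[]}$ for some $G(t)\in R$; then $g \mid f^{[]}$ in $\mathbb F_q[x]$ gives $f^{[]}\in \mathbb F_q[x]\,G^{[]}$, and (5) translates this into $f \in RG$. Irreducibility of $f$ then forces $G$ to be either a unit of $R$ or an associate of $f$. Since the units of $R = \mathbb F_q[t;\theta]$ are exactly $\mathbb F_q^{*}$, in the first case $g = G^{[]}\in \mathbb F_q^{*}$ is a unit of $\mathbb F_q[x]$, and in the second case $G = cf$ with $c\in\mathbb F_q^{*}$, whence $g = c f^{[]}$ is an associate of $f^{[]}$. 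Either way, $g$ is trivial.

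For the converse, assume $f^{[]}$ has no nontrivial factor inside $\mathbb F_q[x^{[]}]$ and suppose $f = FG$ in $R$. Then (5) yields $G^{[]}\mid f^{[]}$ in $\mathbb F_q[x]$, and $G^{[]}\in \mathbb F_q[x^{[]}]$ by construction. By hypothesis $G^{[]}$ is trivial. Here the one point needing care is pulling ``trivial'' back through $h\mapsto h^{[]}$: since $\deg(H^{[]}) = [\deg H] = (p^{\deg H}-1)/(p-1)$ is strictly increasing in $\deg H$, the possibility ``$G^{[]}$ is a unit'' forces $\deg G = 0$ and hence $G\in \mathbb F_q^{*}$, while ``$G^{[]}$ associated to $f^{[]}$'' forces $\deg G = \deg f$ and hence $F\in \mathbb F_q^{*}$. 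In either case one of $F,G$ is a unit of $R$, so $f$ is irreducible.

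I do not anticipate a serious obstacle: the work of the corollary has already been done in Theorem \ref{factorization without theta}. The only genuine content is the matching of ``trivial factor'' on the two sides, which rests on the strict monotonicity of the function $i \mapsto [i]$ and the description of the units of $\mathbb F_q[t;\theta]$ as $\mathbb F_q^{*}$; this is what legitimately makes the equivalence work at the level of degrees.
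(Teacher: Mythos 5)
Your proof is correct and takes exactly the route the paper intends: the paper gives no proof at all, presenting the corollary as an ``obvious'' consequence of Theorem \ref{factorization without theta}(2) and (5), and your argument just supplies the omitted bookkeeping (units of $\mathbb F_q[t;\theta]$ are $\mathbb F_q^{*}$, and $\deg(h^{[]})=[\deg h]$ is strictly increasing) that matches ``trivial factor'' on the two sides.
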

  
Of course, the condition stated in the above corollary \ref{criterion of 
irreducibility for Ore polynomials over finite fields} can be checked using, for 
instance, the Berlekamp algorithm for factoring polynomials over finite fields.  
This leads easily to an algorithm 
for factoring $f(t) \in \mathbb F_q[t;\theta]$.  
Indeed given $f(t)\in \mathbb F_q[t;\theta]$ we first find a polynomial 
$h^{[]}\in \mathbb F_q[x^{[]}]$ such that $h^{[]}$ 
divides $f^{[]}$ (if possible) and we write 
$f^{[]}=g(x)h^{[]}$ for some $g(x)$ in $\mathbb F_q[x]$.
This gives $f(t)=g'(t)h(t) \in \mathbb F_q[t;\theta]$.  We then apply the same procedure 
to $g'(t)$ and find a right factor of $g'(t)$ in $\mathbb F_q[t;\theta]$ 
by first finding (if possible) a $[p]$-factor of $g'^{[]}$...
Let us give some concrete examples.

\begin{examples}
In the next three examples we will consider the field of four elements 
$\mathbb F_4=\{0,1,a,1+a\}$ where $a^2+a+1=0$.  $\theta (a)=a^2=a+1;\; 
\theta(a+1)=(a+1)^2=a$.
\begin{enumerate}
\item[a)] Consider the polynomial $t^3+a \in \mathbb F_4[t;\theta]$.
Its associated $[2]$-polynomial is given by $x^7+a\in \mathbb F_4[x]$.  
Since $a$ is a root of $x^7+a$ it is also a root of $t^3+a$.  This gives
$t^3+a=(t^2+at+1)(t+a)$ in $\mathbb F_4[t;\theta]$.  Now, the $[2]$-polynomial 
associated to the left factor $t^2+at+1$ is $x^3+ax+1\in \mathbb F_4[x]$.  Since this last 
polynomial is actually irreducible we conclude that $t^2+at+1$ is also irreducible in 
$\mathbb F_4[t;\theta]$.  Hence the factorization of $t^3+a$ given above is in fact
a decomposition into irreducible polynomials.

\item[b)] Let us now consider $f(t)=t^4+(a+1)t^3+a^2t^2+(1+a)t+1\in \mathbb F_4[t;
\theta]$.  Its attached $[p]$-polynomial is 
$x^{15}+(a+1)x^7+(a+1)x^3+(1+a)x+1 \in \mathbb F_4[x]$.  
We can factor it as follows: 
$$
(x^{12}+ax^{10}+x^9+(a+1)x^8+(a+1)x^5+(a+1)x^4+x^3+ax^2+x+1)(x^3+ax+1)
$$ 
This last factor is a $[p]$-polynomial which corresponds to 
$t^2+at+1\in \mathbb F_4[t;\theta]$.  Moreover since $x^3+ax+1$ is  
irreducible in $\mathbb F_4[x]$, $t^2+at+1$ is also irreducible in 
$\mathbb F_4[t;\theta]$.  We then easily conclude that $f(t)=(t^2+t+1)(t^2+at+1)$
is a decomposition of $f(t)$ into irreducible factors in  $\mathbb F_4[t;\theta]$.

\item[c)]  Let us consider the polynomial $f(t)=t^5+at^4+(1+a)t^3+at^2+t+1$.
Its attached $[p]$-polynomial is $x^{31}+ax^{15}+(1+a)x^7+ax^3+x+1$.  It is easy to remark 
that $a$ is a root and we get $f(t)=q_1(t)(t+a)$ in $\mathbb F_4[t;\theta]$ where 
$q_1(t)=t^4+(a+1)(t^2+t+1)$.   The $[p]$-polynomial attached to $q_1(t)$ is $x^{15}
+(a+1)(x^3+x+1)$.  Again we get that $a$ is a root and we obtain that 
$q_1(t)=(q_2(t))(t+a)$ in $\mathbb F_4[t;\theta]$ where $q_2(t)=t^3+(a+1)t^2+at+a$.
The $[p]$-polynomial attached to $q_2(t)$ is $x^7+(a+1)x^3+ax+a$.  Once again $a$ is a 
root and we have $q_2(t)=(t^2+t+1)(t+a)$.  Since $t^2+t+1$ is easily seen to be 
irreducible in $\mathbb F_4[t;\theta]$, we have the following factorization of our 
original polynomial: $f(t)=(t^2+t+1)(t+a)^3$.  We can also factorize $f(t)$ as follows: 
$f(t)=(t+a+1)(t+1)(t+a)(t^2+(a+1)t+1)$.
\end{enumerate}
\end{examples}

\begin{remark}
It is a natural question to try to find a good notion of a splitting field attached 
to a 
polynomial of an Ore extension.  The above results justify that, in the case of
a skew polynomial ring $\mathbb F_q[t;\theta]$ where $q=p^n$ and $\theta$
is the Frobenius automorphism, we define the splitting field of a polynomial 
$f(t)\in \mathbb F_q[t;\theta]$ to be the splitting of the polynomial $f^{[]}(x)$
over $\mathbb F_q$.  
\end{remark}

Our next application of Theorem \ref{generalization of
Gordon-Motzkin}, is an easy proof of Hilbert 90 theorem (Cf.
\cite{LL3} for more advanced results on Hilbert 90 theorem in a
($\sigma,\delta$) setting).

\begin{prop}
\label{hilbert 90}
\begin{enumerate}
\item[a)] Let $K$ be a division ring, $\sigma$ an automorphism of $K$
 of finite order $n$ such that no power of $\si$ of order strictly smaller
 than $n$ is inner.  Then $\Delta^{\sigma}(1)$ is algebraic and
$t^n-1\in K[t;\sigma]$ is its minimal polynomial (i.e.
$V(t^n-1)=\Delta^{\sigma} (1)$).
\item[b)] Let $K$ be a division ring of characteristic $p>0$ and $\delta$ a
nilpotent derivation of $K$ of order $p^n$ satisfying no identity
of smaller degree than $p^n$.  Then $\Delta^{\delta}(0)$ is
algebraic and $t^{p^n}$ is its minimal polynomial
$(V(t^{p^n})=\Delta^{\delta}(0) )$.
\end{enumerate}
\end{prop}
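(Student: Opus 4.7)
The plan is to treat (a) and (b) in parallel: in (a) I work with $a=1$ so the associated $(\sigma,\delta)$-PLT is $T_1 = \sigma$ (since $\delta=0$); in (b) I work with $a=0$ so $T_0 = \delta$ (since $\sigma = \mathrm{id}$). Set $f = t^n-1$ or $f = t^{p^n}$ accordingly. In both cases $f(T_a) \equiv 0$ on $K$---in (a) because $\sigma^n = \mathrm{id}$, in (b) because $\delta^{p^n}=0$---so Corollary \ref{the case of T_a}(b) gives $0 = f(T_a)(x) = f(a^x)\,x$ for every $x \in U(K)$; hence $\Delta^{\sigma,\delta}(a) \subseteq V(f)$.

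For the reverse inclusion I would invoke Theorem \ref{generalization of Gordon-Motzkin}(2). The preceding observation says $\ker f(T_a) = K$, so the class $\Delta(a)$ alone contributes $\dim_{C^{\sigma,\delta}(a)} K$ to the sum there. In (a), $C^\sigma(1) = K^\sigma$ and the hypothesis that no positive power of $\sigma$ of order less than $n$ is inner is exactly the Artin--Jacobson condition forcing $[K:K^\sigma]=n$; in (b), $C^\delta(0) = K^\delta$ and the hypothesis that $\delta$ satisfies no identity of degree below $p^n$ is the corresponding Jacobson criterion giving $[K:K^\delta]=p^n$. Since $\deg f = n$ (resp.\ $p^n$), the $\Delta(a)$-summand already saturates the bound, so no other $(\sigma,\delta)$-conjugacy class can contribute a root, and $V(f) \subseteq \Delta^{\sigma,\delta}(a)$.

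To prove minimality of $f$ I would argue by contradiction: if $g\in K[t;\sigma,\delta]$ has $\deg g < \deg f$ and vanishes on $\Delta^{\sigma,\delta}(a)$, then $g(T_a)(x) = g(a^x)\,x = 0$ for every $x\in U(K)$, and $g(T_a)(0)=0$ trivially, so $g(T_a) \equiv 0$ on $K$. Writing $g = \sum_{i=0}^m b_i t^i$ with $m<\deg f$, this says $\sum_i b_i \sigma^i = 0$ on $K$ in case (a) or $\sum_i b_i \delta^i = 0$ on $K$ in case (b). The same independence results invoked above---applied now to at most $n-1$, respectively $p^n-1$, powers---force all $b_i = 0$, and hence $g = 0$.

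The one non-cosmetic input is therefore the dimension identity $[K:C^{\sigma,\delta}(a)] = \deg f$, or equivalently the left-$K$-independence of the powers of $T_a$. This is where the precise hypotheses on $\sigma$ and $\delta$ enter; they are designed exactly so that the classical Artin--Jacobson independence theorems for outer automorphisms and for nilpotent derivations of a division ring apply. Everything else is a routine combination of the evaluation formula of Corollary \ref{the case of T_a}(b) with the counting inequality of Theorem \ref{generalization of Gordon-Motzkin}(2).
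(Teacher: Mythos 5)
Your proof is correct and follows essentially the same route as the paper's: the inclusion $\Delta^{\si,\de}(a)\subseteq V(f)$ by direct evaluation ($f(T_a)\equiv 0$ on $K$), and the reverse inclusion by saturating the dimension bound of Theorem \ref{generalization of Gordon-Motzkin}(2) using the Artin--Jacobson identity $[K:C^{\si,\de}(a)]=\deg f$. The explicit minimality argument you add at the end is a detail the paper leaves implicit, but it rests on the same independence input.
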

\begin{proof}
a)  Since $T_1^n=\si^n=id.$, we have $\ker(T_1^n-id.)=K$. It is
easy to check that $(t^n-1)(\si(x)x^{-1})=0$ for any $x\in
K\setminus \{0\}$.  We thus have $\Delta^{\si}(1)\subseteq V(t^n-1)$.
 Standard Galois theory of division rings implies that
$[K:Fix(\si)]_r=n$.  Moreover $C^{\si}(1)=Fix(\si)$, part two of
Theorem \ref{generalization of Gordon-Motzkin} than quickly yields
the result.

\noindent b)  This is similar to the above proof noting that
$K=\ker(\de^{p^n})=\ker(T_0^{p^n})$, $C^{\de}(0)=\ker(\de)$ and
$[K:\ker(\de)]_r=p^n$.
\end{proof}

\begin{remark}
We do get back the standard Hilbert 90 theorem remarking in
particular that $\Delta^{\si}(1)=\{\si(x)x^{-1}\,|\,x\in
K\setminus\{0\} \}$.
\end{remark}

As another application, let us now give a quick proof of a
generalized version of the Frobenius formula in characteristic
$p>0$.   The proof of this formula is usually given for a field
through long computations involving additive commutators (Cf.
Jacobson \cite{Ja1}, p. 190). Using polynomial maps we get a shorter proof.

\begin{prop}
Let $K$ be a ring of characteristic $p>0$, $\delta$ be a (usual)
derivation of $K$ and $a$ any element in $K$.  In $R=K[t;id.,\delta]$ we
have
$$
(t-a)^p=t^p-T_a^p(1).
$$
\end{prop}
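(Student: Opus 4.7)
The plan is to show that $(t-a)^p - t^p$ is a constant (i.e.\ lies in $K$), and then to pin down that constant by evaluation at $a$.

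The tool I would introduce is the formal derivative $\partial_t \colon R \to R$ defined on $f = \sum_i c_i t^i$ by $\partial_t(f) := \sum_i i\,c_i\,t^{i-1}$, extended additively. First I would verify that $\partial_t$ is a derivation of $R$. The cleanest route is to work inside $R[s]/(s^2)$, with $s$ a central indeterminate of square zero, and observe that $t \mapsto t+s$ and $k \mapsto k$ for $k \in K$ extends to a ring homomorphism $\varphi\colon R \to R[s]/(s^2)$; the only relation to verify is $(t+s)k = k(t+s) + \delta(k)$, which follows from $tk = kt + \delta(k)$ together with the centrality of $s$. Writing $\varphi(f) = f + s\,\partial_t(f)$, the multiplicativity of $\varphi$ is exactly the Leibniz rule $\partial_t(fg) = \partial_t(f)\,g + f\,\partial_t(g)$.

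With $\partial_t$ a derivation satisfying $\partial_t(t) = 1$ and $\partial_t|_K = 0$, induction via Leibniz gives $\partial_t((t-a)^n) = n(t-a)^{n-1}$ and $\partial_t(t^n) = n\,t^{n-1}$. In characteristic $p$ both vanish for $n = p$, so $(t-a)^p - t^p$ lies in $\ker\partial_t$. Since this difference has degree at most $p-1$, and $\partial_t(\sum_{i=0}^{p-1} c_i t^i) = 0$ forces $i c_i = 0$ for every $1 \le i \le p-1$ (so $c_i = 0$, as $1,\dots,p-1$ are invertible in $\mathbb{F}_p \subseteq K$), we conclude $(t-a)^p - t^p = c$ for some $c \in K$.

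To identify $c$, invoke Remark \ref{composition and link}(2), which gives $f(a) = f(T_a)(1)$ for any $f \in R$: here $((t-a)^p)(a) = 0$ because $(t-a)^p \in R(t-a)$, while $t^p(a) = T_a^p(1)$. Therefore $c = ((t-a)^p - t^p)(a) = -T_a^p(1)$, yielding $(t-a)^p = t^p - T_a^p(1)$. The only step requiring care is the verification that $\partial_t$ is a derivation of $R$; the $R[s]/(s^2)$ construction reduces this to the single commutation check above, and this is what makes the argument shorter than the direct additive-commutator computation in Jacobson.
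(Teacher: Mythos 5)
Your proof is correct and follows essentially the same route as the paper's: the paper defines exactly this derivation $d$ on $R$ (by $d|_K=0$, $d(t)=1$), uses $d((t-a)^p)=p(t-a)^{p-1}=0$ in characteristic $p$ to kill the coefficients $c_1,\dots,c_{p-1}$, and then evaluates at the right root $a$ to identify the constant term as $T_a^p(1)$. The only difference is that you supply the verification (via $R[s]/(s^2)$) that this formal derivative is indeed a well-defined derivation of the Ore extension, a step the paper leaves as ``easy to check.''
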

\begin{proof}
Define a derivation $d$ on $R$ by $d|_K=0$ and $d(t)=1$.  It is
easy to check that this gives rise to a well defined derivation on
$R$.  Notice that $d(t-a)=1$ commutes with $t-a$ hence
$d((t-a)^p)=0$.  Let us write $(t-a)^p=\sum_{i=0}^pc_it^i$.
Applying $d$ on both sides we quickly get that $c_i=0$ for all
$i=1,\dots,p-1$.  We thus have $(t-a)^p=t^p-c_0$.  Since $a$ is a
right root we indeed have that $c_0=t^p(a)=T_a^p(1)$.
\end{proof}

Let us now analyze the maps arising in a division process. For typographical reasons it 
is convenient to write, for $a\in A$ and $i\ge 0$, $N_i(a):=T_a^i(1)$.  
Properties of these maps can be found in 
previous works (e.g. \cite{LL1}, \cite{LL2}).  Here we
will look at the quotients and get some formulas generalizing
elementary ones.  It doesn't seem that these maps have been
introduced earlier in this setting.

\begin{prop}
\label{the quotients} Let $A,\si,\de$ be a ring, an endomorphism
and a $\si$-derivation of $A$, respectively.  For $a\in A$ and $i\ge 0$, let us
write $t^i=q_{i,a}(t)(t-a)+N_i(a)$ in $R=A[t;\si,\de]$.  We have:
\begin{enumerate}
\item[1)] If $f(t)=\sum_{i=0}^na_it^i\in R$, then
$f(t)=\sum_{i=0}^na_iq_{i,a}(t)(t-a)+\sum_{i=0}^na_iN_i(a)$.
\item[2)] $q_{0,a}=0,\; q_{1,a}=1$ and, for $i\ge 1$,
$q_{i+1,a}(t)=tq_{i,a}(t)+\si(N_i(a))$.
\item[3)] $N_i(b)-N_i(a)=q_{i,a}(T_b)(b-a)=q_{i,b}(T_a)(a-b)$.
\end{enumerate}
\end{prop}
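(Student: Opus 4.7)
The plan is to take the defining identity $t^i=q_{i,a}(t)(t-a)+N_i(a)$ as the sole input and deduce each of (1), (2), (3) from it together with two tools already available in the paper: the Ore multiplication rule $ta=\sigma(a)t+\delta(a)$, and the generalized product formula of Theorem~\ref{ring homomorphism and generalized product formula}(2) (equivalently Corollary~\ref{the case of T_a}(a)).

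For (1), I would simply multiply the defining identity by $a_i$ on the left and sum over $i$; this gives exactly the stated decomposition, and no further argument is needed.

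For (2), the starting initial values are immediate: $t^0=0\cdot(t-a)+1$ forces $q_{0,a}=0$ and $N_0(a)=1$, while $t=1\cdot(t-a)+a$ forces $q_{1,a}=1$ and $N_1(a)=a$. For the recursion I would write
\[
t^{i+1}=t\cdot t^i = t\bigl(q_{i,a}(t)(t-a)+N_i(a)\bigr)= tq_{i,a}(t)(t-a)+tN_i(a),
\]
and then expand $tN_i(a)=\sigma(N_i(a))t+\delta(N_i(a))$ by the Ore rule. The next move is to force a $(t-a)$ to appear on the right: write $\sigma(N_i(a))t=\sigma(N_i(a))(t-a)+\sigma(N_i(a))a$, so that
\[
t^{i+1}=\bigl(tq_{i,a}(t)+\sigma(N_i(a))\bigr)(t-a)+\bigl(\sigma(N_i(a))a+\delta(N_i(a))\bigr).
\]
The constant term on the right is precisely $T_a(N_i(a))=T_a^{i+1}(1)=N_{i+1}(a)$. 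By uniqueness of the right division by the monic polynomial $t-a$, this identifies $q_{i+1,a}(t)=tq_{i,a}(t)+\sigma(N_i(a))$, which is the claim. The only delicate point here, and the one I would expect to be the main obstacle, is checking that everything stays on the correct side: one must keep in mind that $\sigma(N_i(a))$ is a coefficient which must be pushed past $t$ using $ta=\sigma(a)t+\delta(a)$ rather than freely commuted.

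For (3), I would apply the generalized product formula to the identity $t^i=q_{i,a}(t)(t-a)+N_i(a)$ evaluated at $b$. By Theorem~\ref{ring homomorphism and generalized product formula}(2) (or its cyclic-module version Corollary~\ref{the case of T_a}(a)), $(fg)(b)=f(T_b)(g(b))$, so $(q_{i,a}(t)(t-a))(b)=q_{i,a}(T_b)((t-a)(b))=q_{i,a}(T_b)(b-a)$; since $(t^i)(b)=N_i(b)$ and constants evaluate to themselves, we obtain $N_i(b)=q_{i,a}(T_b)(b-a)+N_i(a)$, giving the first equality. The second is just the same statement after exchanging the roles of $a$ and $b$ (applying the same argument to the division $t^i=q_{i,b}(t)(t-b)+N_i(b)$ and evaluating at $a$).
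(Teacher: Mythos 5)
The paper offers no proof of this proposition (it is explicitly ``left to the reader''), so your write-up supplies the missing content, and the route you take --- left-multiplying the defining identity for (1), multiplying by $t$ and re-absorbing a factor $(t-a)$ for (2), and evaluating via the generalized product formula for (3) --- is clearly the intended one. Parts (1) and (2) are correct and complete: the identification of the new remainder as $\sigma(N_i(a))a+\delta(N_i(a))=T_a(N_i(a))=T_a^{i+1}(1)=N_{i+1}(a)$, the appeal to uniqueness of right division by the monic polynomial $t-a$, and the initial values $N_0(a)=1$, $N_1(a)=a$ (using $\sigma(1)=1$, $\delta(1)=0$) are exactly what is needed. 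The first equality of (3) is also correct as you derive it.

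The one genuine problem is your last sentence. Exchanging $a$ and $b$ in your (correct) first computation yields $N_i(a)-N_i(b)=q_{i,b}(T_a)(a-b)$, which is the \emph{negative} of the printed claim $N_i(b)-N_i(a)=q_{i,b}(T_a)(a-b)$. Since $q_{i,b}(T_a)$ is additive, the printed claim together with the swapped identity would force $2\bigl(N_i(b)-N_i(a)\bigr)=0$, which already fails for $\sigma=\mathrm{id}$, $\delta=0$: there $q_{i,b}(t)=t^{i-1}+bt^{i-2}+\cdots+b^{i-1}$ and $T_a(x)=xa$, so $q_{i,b}(T_a)(a-b)=\sum_{j=0}^{i-1}b^{i-1-j}(a-b)a^{j}=a^i-b^i=N_i(a)-N_i(b)$. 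So ``the same statement after exchanging the roles of $a$ and $b$'' is not the stated identity but its negative; the second equality as printed is a sign slip in the proposition, and the correct form is $N_i(b)-N_i(a)=q_{i,b}(T_a)(b-a)$, equivalently $-q_{i,b}(T_a)(a-b)$. You should either prove that corrected form (which your argument does, after one more application of additivity) or explicitly flag the discrepancy; as written, your final step asserts that it establishes something it does not.
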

\begin{proof}
The elementary proofs are left to the reader.
\end{proof}

\begin{remark}
Even the case when $\si=id.$ and $\de=0$ is somewhat interesting.
In this case the polynomials $q_{i,a}$ can be expressed
easily: $q_{i,a}(t)=t^{i-1}+at^{i-1}+\cdots + a^{i-1}$.  Of
course, we also get some familiar formulas. For instance
the last equation in \ref{the quotients} above gives the classical
equality in a noncommutative ring $A$:
$b^i-a^i=(b-a)b^{i-1}+a(b-a)b^{i-2}+\cdots + a^{i-1}(b-a)$.
\end{remark}

%

We now present the last application which is related to the case
when the base ring is left duo.
\begin{prop}
\label{ LLCM of 2 linear poly. and left duo}

Let $A,\si,\de$ be respectively, a ring, an endomorphism of $A$ and a
$\si$-derivation of $A$.   The following are equivalent:
\begin{enumerate}
\item[(i)] For $a,b\in A$, there exist $c,d\in A$ such that
$(t-c)(t-a)=(t-d)(t-b)$ in $R=A[t;\si,\de]$;
\item[(ii)] For any $a,b\in A$, there exists $c\in A$ such that $T_b(a)=ca=L_c(a)$;
\item[(iii)] For any $a,b\in A$, there exists $c\in A$ such that
$\si(a)b+\de(a)=ca$.
\end{enumerate}
In particular, when $\si=id.$ and $\de=0$, the above conditions
are also equivalent to the ring $A$ being left duo.
\end{prop}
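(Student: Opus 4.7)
The equivalence $(ii)\Leftrightarrow(iii)$ is immediate: by the definition in Example \ref{examples of PLT}(2), $T_b(a)=\sigma(a)b+\delta(a)$, so the condition $T_b(a)=ca$ is literally $\sigma(a)b+\delta(a)=ca$. So no work is needed there; the whole content is in relating (i) with (ii) and then with the duo property.

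For $(i)\Leftrightarrow(ii)$, the plan is to expand both products in $R=A[t;\sigma,\delta]$ using $ta=\sigma(a)t+\delta(a)$:
$$
(t-c)(t-a)=t^2-(\sigma(a)+c)t+(ca-\delta(a)),\qquad (t-d)(t-b)=t^2-(\sigma(b)+d)t+(db-\delta(b)).
$$
Equating coefficients in $t$ gives $d=c+\sigma(a-b)$, and substituting this into the equation of constant terms yields, after rearranging,
$$
c(a-b)=\sigma(a-b)b+\delta(a-b)=T_b(a-b).
$$
Thus (i) is precisely the statement that, for all $a,b\in A$, the element $T_b(a-b)$ lies in the left ideal $A(a-b)$, together with the recipe $d=c+\sigma(a-b)$ for recovering $d$ from $c$.

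Now the forward direction $(ii)\Rightarrow(i)$ follows by applying (ii) to the pair $(a-b,\,b)$ to produce a $c$ with $T_b(a-b)=c(a-b)$ and then defining $d:=c+\sigma(a-b)$. For $(i)\Rightarrow(ii)$, given an arbitrary pair $(a,b)$, apply (i) to the pair $(a+b,\,b)$: the above computation then gives some $c$ with $c\cdot a=c((a+b)-b)=T_b((a+b)-b)=T_b(a)$, which is exactly (ii) for $(a,b)$. The only step that could trip one up is this reparameterization $a\mapsto a+b$ (or equivalently $a\mapsto a-b$); it is essentially the observation that if a condition on $A(a-b)$ holds for all pairs then it holds for all single elements.

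Finally, specialize $\sigma=\mathrm{id}$ and $\delta=0$: then $T_b(a)=ab$, so condition (ii) becomes: for all $a,b\in A$ there exists $c\in A$ with $ab=ca$, i.e. $ab\in Aa$. This is precisely the statement that every principal left ideal is a two-sided ideal, which is one of the standard equivalent formulations of $A$ being left duo, so this final equivalence needs only the citation of this standard characterization.
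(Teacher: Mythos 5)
Your proof is correct, and it follows the same skeleton as the paper's: both arguments reduce condition (i) for the pair $(a,b)$ to the single identity $c(a-b)=T_b(a-b)$, both recover (ii) from (i) (and vice versa) by the reparameterization $a\mapsto a\pm b$, and both treat $(ii)\Leftrightarrow(iii)$ as a restatement of the definition of $T_b$. The difference is purely in how that key identity is extracted. The paper never expands the quadratics: for $(i)\Rightarrow(ii)$ it only uses that $b$ is a right root of $(t-c)(t-a)$ and evaluates via the product formula $\bigl((t-c)(t-a)\bigr)(b)=(T_b-c)(b-a)$, and for $(iii)\Rightarrow(i)$ it checks $\bigl((t-c)(t-a)\bigr)(b)=0$ and gets $d$ for free from monic right division by $t-b$ --- this is deliberate, since the point of that section is to showcase the maps $T_a$. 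Your coefficient comparison is more elementary and self-contained (no appeal to Corollary \ref{the case of T_a}), and it buys you slightly more: the explicit formula $d=c+\sigma(a-b)$ and the observation that equality of the two quadratics is \emph{equivalent} to, not just implied by, the divisibility condition. The final specialization to left duo rings is handled identically in both.
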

\begin{proof}
\noindent $(i)\Rightarrow (ii)$.  Clearly $(i)$ implies that $b$
is a (right) root of $(t-c)(t-a)$.  Hence for every $a,b\in A$
there exists $c\in A$ such that $(T_b-c)(b-a)=0$.  Since $a,b$ are
any elements of $A$ this implies $(ii)$.

\noindent $(ii)\Rightarrow (iii)$.  This comes from
the definition of $T_b$.

\noindent $(iii)\Rightarrow (i)$.  Let $a,b\in A$.  Writing the
condition $(iii)$ for the elements $b-a$ and $b$ we find an
element $c\in A$ such that $\si(b-a)b+\de(b-a)=c(b-a)$.  We then
check that  $((t-c)(t-a))(b)=0$.   This shows that $(t-c)(t-a)$ is
right divisible by $t-b$ and this proves statement $(i)$.

\noindent The additional statement is clear from $(iii)$ indeed in
this case $(iii)$ means that for any $a,b\in A$, $ab\in Aa$.  Or
in other words, that any left principal ideal $Aa$ is in fact a two
sided ideal.
\end{proof}

The last statement of the previous proposition \ref{ LLCM of 2
linear poly. and left duo} justifies the following definition:

\begin{definition}
A ring $A$ is left $(\si,\de)$-duo if for any $a,b\in A$, there
exists $c\in A$ such that $T_b(a)=ca$.
\end{definition}

Proposition \ref{ LLCM of 2 linear poly. and left duo} was already given in the last 
section of \cite{DL}.
Here we stress the use of $T_a$.   In fact the pseudo-linear map
$T_a$ enables us to show that in an Ore extension built on a left
$(\si,\de)$-duo ring, the least left common multiple exists for
any two monic polynomials as long as one of them can be factorized
linearly.  We state this more precisely in the following theorem.
This theorem was also proved by M. Christofeul with a different,
more computational, proof \cite{C}.

\begin{theorem}
Let $a_1,\dots,a_n$ be elements in a left $(\si,\de)$-duo ring
$A$.  Then for any monic polynomial $g(t)\in R=A[t;\si,\de]$ there
exists a monic least left common multiple of $g(t)$ and of
$(t-a_n)\cdots(t-a_1)$ of degree $\le n+\deg (g)$.
\end{theorem}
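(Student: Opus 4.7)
My plan is to reduce the theorem to a single ``pushing'' lemma and then iterate it $n$ times. The lemma to establish is: \emph{in a left $(\sigma,\delta)$-duo ring $A$, for every monic polynomial $g(t) \in R$ and every $a \in A$, there exist $c \in A$ and a monic polynomial $g'(t) \in R$ with $\deg g' = \deg g$ such that $(t-c)g(t) = g'(t)(t-a)$.} Once this is granted, one pushes each linear factor $(t-a_i)$ through $g$ from right to left in succession, producing a monic common left multiple of $g(t)$ and $(t-a_n)\cdots(t-a_1)$ of degree $n+\deg g$.

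To prove the pushing lemma, I would first apply the left $(\sigma,\delta)$-duo hypothesis to obtain $c \in A$ with $T_a(g(a)) = c\cdot g(a)$. Then the product formula of Theorem \ref{ring homomorphism and generalized product formula}(2), specialized to the monic polynomial $p = t-a$ (for which $T_p$ coincides with the PLT $T_a$ on $A \cong R/R(t-a)$), yields
\[
((t-c)g(t))(a) = (t-c)(T_a)(g(a)) = T_a(g(a)) - c\cdot g(a) = 0.
\]
Hence $a$ is a right root of $(t-c)g(t)$, and by right Euclidean division by the monic polynomial $t-a$ one writes $(t-c)g(t) = g'(t)(t-a)$ for some $g'(t) \in R$. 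Since both sides are monic of degree $\deg g + 1$, a comparison of leading coefficients forces $g'$ to be monic of degree $\deg g$.

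With the lemma in hand, set $g_0 := g$ and recursively choose $c_{k+1} \in A$ together with a monic $g_{k+1}(t) \in R$ of degree $\deg g$ satisfying $(t-c_{k+1})g_k = g_{k+1}(t-a_{k+1})$ for $k=0,\dots,n-1$. Concatenating these identities yields
\[
(t-c_n)(t-c_{n-1})\cdots(t-c_1)\, g(t) \;=\; g_n(t)\, (t-a_n)(t-a_{n-1})\cdots(t-a_1),
\]
so this common value $m(t)$ is a monic polynomial of degree exactly $n+\deg g$ belonging to $Rg \cap R(t-a_n)(t-a_{n-1})\cdots(t-a_1)$. It is therefore a monic common left multiple, and selecting one of minimum degree from the now non-empty set of monic common left multiples produces the desired monic LLCM of degree at most $n+\deg g$.

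The main point requiring care is the chaining step: at each stage the hypothesis of the pushing lemma must still be met, i.e.\ $g_k$ must remain monic of the same degree as $g$ in order to feed back into the lemma at the next stage. Since the lemma itself guarantees exactly this, the induction propagates through without further obstacle, and the theorem follows.
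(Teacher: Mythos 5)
Your proposal is correct and follows essentially the same route as the paper: the paper's proof is an induction on $n$ whose base case is exactly your ``pushing'' lemma (using the left $(\si,\de)$-duo hypothesis to find $c$ with $T_{a}(g(a))=c\,g(a)$, so that $(t-c)g$ is right divisible by $t-a$), and whose inductive step amounts to iterating that lemma just as you do. The only shared looseness is that both arguments really produce a monic common left multiple of degree $\le n+\deg(g)$ and then pass to one of minimal degree, which the paper likewise treats as yielding the LLCM without further comment.
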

\begin{proof}
We proceed by induction on $n$.  If $n=1$ the fact that $A$ is
$(\si,\de)$-left duo implies that there exists $c\in A$ such that
$T_{a_1}(g(a_1))=cg(a_1)$ and this shows that the polynomial
$(t-c)g(t)$ is divisible on the right by $t-a_1$, as desired.

\noindent Assume $n>1$.  By the above paragraph, there exist a
monic polynomial $g_1(t)\in R$ and an element $c\in A$ such that
$g_1(t)(t-a_1)=(t-c)g(t)$.  On the other hand,  the induction
hypothesis shows that there exist monic polynomials $h(t), p(t)\in
R$ such that $h(t)(t-a_n)\cdots (t-a_2)=p(t)g_1(t)$ where $\deg(h)
+ n - 1\le \deg(g_1)+n-1=\deg(g)+n-1$. This implies that
$h(t)(t-a_n)\cdots
(t-a_2)(t-a_1)=p(t)g_1(t)(t-a_1)=p(t)(t-c)g(t)$.  This shows that
$g(t)$ and $(t-a_1)(t-a_2)\cdots (t-a_n)$ have a monic common
multiple of degree $\le \deg(g)+n$, as desired.
\end{proof}


\begin{thebibliography}{ABC}
\vspace{.3cm}

\bibitem[BGU]{BGU} D.~Boucher, W.~Geiselmann and F.~Ulmer: {\it Skew Cyclic
Codes}, Applied Algebra in Engineering, Communication and
Computing, {\bf 18} (2007), 379-389.

\bibitem[BU]{BU} D.~Boucher and F.~Ulmer: {\it Coding with skew polynomial
rings}, Journal of Symbolic Computation, {\bf 44} (2009), 1644-1656.

\bibitem[C]{C} M.~Christofeul: Ph. D. Thesis, in preparation,
Universit\'{e} d'Artois.


\bibitem[Co]{Co} P.~M.~Cohn: {\it Skew Fields. Theory of General
Division Rings}, Encyclopedia in Math., Vol.\,57, Cambridge
Univ.~Press, Cambridge, 1995.

\bibitem[DL]{DL} J.~Delenclos and A.~Leroy: {\it Symmetric functions and
W-polynomials}, Journal of Algebra and its Applications, {\bf 6}
(2007), 815-837.





\bibitem[HR]{HR} D.~E.~Haile and L.~H.~Rowen: {\it Factorization of
polynomials over division algebras}, Algebra Colloq. {\bf 2}
(1995), 145-156.



\bibitem[Ja$_1$]{Ja1} N.~Jacobson: {\it Lectures in abstract
algebra}, Vol. 3, Van Nostrand, 1964.


\bibitem[Ja$_2$]{Ja2} N.~Jacobson: {\it Pseudo-linear transformations}, Annals of Math.  
{\bf 38} (1937), 484-507.



\bibitem[La]{La}   T.~Y.~Lam: {\it A First Course in Noncommutative
Rings}, Graduate Texts in Math., Vol.~{\bf 131}, Springer-Verlag,
Berlin-Heidelberg-New York, 1991.



\bibitem[LL$_1$]{LL1} T.~Y.~Lam and A.~Leroy: {\it Vandermonde and
Wronskian matrices over division rings}, J.~Algebra {\bf 119}
(1988), 308-336.

\bibitem[LL$_2$]{LL2} T.~Y.~Lam and A.~Leroy: {\it Algebraic conjugacy
classes and skew polynomial rings}, in: ``Perspectives in Ring
Theory'', (F.~van Oystaeyen and L.~Le Bruyn, eds.), Proceedings of
the Antwerp Conference in Ring Theory, pp.\,153-203, Kluwer
Academic Publishers, Dordrecht/Boston/London, 1988.

\bibitem[LL$_3$]{LL3} T.~Y.~Lam and A.~Leroy: {\it Hilbert 90 Theorems
for division rings}, Trans.~A.M.S. {\bf 345} (1994), 595-622.


\bibitem[LL$_4$]{LL4} T.~Y.~Lam and A.~Leroy: {\it Wedderburn
polynomials over division rings,} I,  Journal of Pure and Applied
Algebra, {\bf 186} (2004), 43-76.

\bibitem[LLO]{LLO} T.~Y.~Lam, A.~Leroy and A.~Ozturk: {\it Wedderburn
polynomials over division rings,} II, Proceedings of a conference
held in Chennai at the Ramanujan Institute (India) Contemporary
mathematics (456) 2008, pp. 73-98





\bibitem[L]{L} A.~Leroy: {\it Pseudo-linear transformations and evaluation
in Ore extensions}, Bull. Belg. Math. Soc. {\bf 2} (1995),
321-347.

\bibitem[LO]{LO} A.~Leroy, A.Ozturk: {\it Algebraic and
F-independent sets in $2$-firs},Com. in Algebra, Vol. {\bf 32} (5)
(2004), 1763-1792.

\bibitem[MB1]{MB1} B. Marinescu, H. Bourl\`{e}s: {\it An intrinsic algebraic
setting for poles and zeros of linear time-varying systems},
System \& control letters, {\bf 58} (2009), 248-253.

\bibitem[MB2]{MB2} B. Marinescu, H. Bourl\`{e}s: {\it Linear Time-Varying 
Systems Algebraic-Analytic Approach},  Lectures notes in Control and Informations Sciences, Vol.\,410, Springer-Verlag, 2011.

{\it Skew Fields. Theory of General
Division Rings}, Encyclopedia in Math., Vol.\,57, Cambridge
Univ.~Press, Cambridge, 1995.


\bibitem[Or]{Or} O.~Ore: {\it Theory of noncommutative polynomials},
Annals of Math., {\bf 34} (1933), 480-508.










\end{thebibliography}
\end{document}